\newtheorem{theorem}{Theorem}[section]
\newtheorem{lemma}[theorem]{Lemma}
\newtheorem{corollary}[theorem]{Corollary}
\newtheorem{proposition}[theorem]{Proposition}
\theoremstyle{definition}
\newtheorem{remark}[theorem]{Remark}
\def\B{{\mathbb B}}
\def\S{{\mathcal S}}
\def\M{{\mathcal M}}
\def\R{{\mathcal R}}
\def\L{{\mathcal L}}
\def\D{{\mathcal D}}
\def\H{{\mathcal H}}
\def\J{{\mathcal J}}
\begin{document}
\title[Preserving Green's relations]
{Linear isomorphisms preserving Green's relations for matrices over semirings}

\maketitle

\begin{center}

ALEXANDER GUTERMAN\footnote{
Faculty of Algebra, Department of Mathematics and
Mechanics, Moscow State University, GSP-1, 119991 Moscow, Russia.},
MARIANNE JOHNSON\footnote{School of Mathematics, University of Manchester,
Manchester M13 9PL, UK. Email \texttt{Marianne.Johnson@maths.manchester.ac.uk}.} \\
and
MARK KAMBITES\footnote{School of Mathematics, University of Manchester,
Manchester M13 9PL, UK. Email \texttt{Mark.Kambites@manchester.ac.uk}.}

\date{\today}
\keywords{}
\thanks{}
\end{center}

\begin{abstract} In this paper we characterize those linear bijective maps on the monoid of all $n\times n$ square matrices over an anti-negative semifield which preserve and strongly preserve each of Green's equivalence relations $\L$, $\R$, $\D$, $J$ and the corresponding three pre-orderings $\leq_\L$, $\leq_\R$, $\leq_J$. These results apply in particular to the tropical and boolean semirings, and for these two semirings we also obtain corresponding results for the $\H$ relation. \\\ \\
{\bf Key words: Green's relations, linear preservers, semifield, tropical semiring, boolean semiring}.\\ \ \\
Mathematics Subject Classification : 15A86 (primary); 15A80, 16Y60, 12K10 (secondary).
\end{abstract}

\section{Introduction}
Given an equivalence or order relation on a certain algebraic system, it is natural to ask: what are the transformations that can be performed on this system that leave the relation invariant? In the case where the algebraic system under consideration is a matrix algebra over a field, the investigation of such transformations dates back to the following result of  Frobenius \cite{Fr}, which gives a characterisation of the bijective complex linear transformations for which the determinant is an invariant:
\begin{theorem} {\rm(Frobenius 1897)}
     \label{thm1.1} Let ${\mathbb C}$ be the field of complex numbers, and let $T:M_n({\mathbb C})\to M_n({\mathbb C})$ be a
bijective linear transformation such that $\det T(X)=\det X$
for all matrices $X\in M_n({\mathbb C})$. Then there exist invertible matrices $U,V\in
M_n({\mathbb C})$, with $\det (UV)=1$, such that either $T(X)=UX V$ for all matrices $ X\in
M_n({\mathbb C})$, or
$T(X)=U X^T V $ for all $X\in M_n({\mathbb C})$,  where $X^T$ denotes the transposed
matrix.
     \end{theorem}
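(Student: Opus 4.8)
The plan is to show that determinant preservation forces $T$ to respect the rank-one matrices, and then to exploit the known structure of such rank-one preservers. First I would record two immediate consequences of the hypothesis. Since $\det T(X)=\det X$ and $T$ is bijective, substituting $X=T^{-1}(Y)$ shows that $T^{-1}$ also preserves the determinant; moreover both maps send singular matrices to singular matrices and invertible matrices to invertible matrices. The key observation is a description of rank that $T$ can ``see'' through the determinant: for a fixed matrix $R$, multilinearity of $\det$ in the columns shows that $\lambda\mapsto\det(X+\lambda R)$ is a polynomial in $\lambda$, and it is affine (degree $\le 1$) for \emph{every} $X$ precisely when $\rk R\le 1$ (a $\lambda^2$-coefficient is a sum of mixed determinants each using two columns drawn from the column space of $R$, which vanishes exactly when that space is at most one-dimensional). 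Since $T$ is linear and determinant preserving, $\det(T(X)+\lambda\, T(R))=\det(X+\lambda R)$, so this affineness passes from $R$ to $T(R)$. Hence $T$ maps rank-$\le 1$ matrices to rank-$\le 1$ matrices, and because $T$ and $T^{-1}$ are injective and fix $0$, $T$ restricts to a bijection of the set of rank-one matrices.

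Next I would invoke the classification of linear bijections preserving rank-one matrices. For a nonzero column vector $a$, set $\mathcal{C}_a=\{aw^T:w\in\mathbb{C}^n\}$ and $\mathcal{R}_a=\{wa^T:w\in\mathbb{C}^n\}$; these are $n$-dimensional subspaces all of whose nonzero elements have rank one, and the classical fact (for $n\ge 2$) is that every maximal subspace with this property is one of the $\mathcal{C}_a$ or $\mathcal{R}_a$. As $T$ is linear, bijective and rank-one preserving, it permutes these maximal subspaces, and the two families are distinguishable by their intersection patterns (two distinct subspaces of the same type meet only in $0$, whereas a $\mathcal{C}$ and an $\mathcal{R}$ meet in a line). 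Thus $T$ either carries every $\mathcal{C}_a$ to some $\mathcal{C}_{a'}$ or carries every $\mathcal{C}_a$ to some $\mathcal{R}_{a'}$. In the first case the assignments on column-data and row-data induce invertible linear maps $P,Q$ with $T(\mathcal{C}_a)=\mathcal{C}_{Pa}$, and tracking $ab^T$ through both gives $T(ab^T)=\alpha\,(Pa)(Qb)^T$ with $\alpha$ a scalar that bilinearity forces to be constant, hence $T(ab^T)=P\,(ab^T)\,Q^T$; since rank-one matrices span $M_n(\mathbb{C})$, this yields $T(X)=UXV$ with $U=P$, $V=Q^T$. The type-swapping case produces $T(X)=UX^TV$ in exactly the same way.

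Finally I would impose the determinant condition on the two resulting forms. From $T(X)=UXV$ and $\det T(X)=\det X$ we get $\det U\,\det V\,\det X=\det X$ for all $X$, whence $\det(UV)=1$; the transpose case is identical because $\det X^T=\det X$. This gives precisely the two alternatives in the statement, with the required normalisation $\det(UV)=1$.

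The main obstacle is the rank-one preserver step. Proving that the maximal rank-one subspaces are exactly the families $\mathcal{C}_a$ and $\mathcal{R}_a$, and then upgrading the induced projective assignments $a\mapsto a'$ to genuine invertible \emph{linear} maps (rather than merely additive or semilinear maps twisted by a field automorphism of $\mathbb{C}$) and checking that the scalar $\alpha$ is constant, is where the real work lies. It is the $\mathbb{C}$-linearity of $T$, rather than mere additivity, that rules out a nontrivial field automorphism and pins the induced maps down to honest linear maps, thereby forcing the clean dichotomy $UXV$ versus $UX^TV$.
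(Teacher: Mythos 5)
The paper does not actually prove this statement: Theorem~\ref{thm1.1} is quoted as classical background from Frobenius \cite{Fr}, and the introduction only alludes to Dieudonn\'e's later approach via singular matrices and projective geometry. So your proposal can only be compared against the literature, and judged that way it is a correct outline of the standard modern route: convert determinant preservation into rank-one preservation, then apply the Marcus--Moyls-style classification of rank-one preservers. Each step checks out, with three remarks. First, your affineness criterion for rank is right, but the parenthetical justifies only the easy direction; for the converse, write a rank-$r$ matrix $R$ as $P(I_r \oplus 0)Q$ with $P,Q$ invertible and take $X = P(0 \oplus I_{n-r})Q$, so that $\det(X+\lambda R) = \det(PQ)\,\lambda^r$ has degree $r \ge 2$. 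Second, the transfer step is exactly where bijectivity enters (affineness of $\det(Y + \lambda T(R))$ is needed for \emph{all} $Y$, so you need every $Y$ to be $T(X)$, and you need the same argument for $T^{-1}$ to get a bijection on rank-one matrices); you have this right. Third, your closing worry about semilinear maps twisted by field automorphisms is avoidable, because the fundamental theorem of projective geometry is never needed: fix $b_0$ and a vector $w_0$ spanning the common row space of the matrices $T(ab_0^T)$, and define $P$ by $T(ab_0^T) = (Pa)w_0^T$. Since $u \mapsto uw_0^T$ is a linear injection, linearity and injectivity of $P$ are inherited directly from those of $T$; defining $Q$ analogously and expanding $T(a(b+b_0)^T)$ into linearly independent rank-one summands forces your scalar $\alpha$ to be identically $1$, giving $T(X) = UXV$ (or $UX^TV$) on the spanning set of rank-one matrices, after which $\det T(I) = \det I$ yields $\det(UV)=1$. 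The one ingredient you invoke wholesale --- that the maximal subspaces consisting of rank-at-most-one matrices are precisely the $\mathcal{C}_a$ and $\mathcal{R}_a$ --- is genuinely classical and has a short self-contained proof (two rank-one matrices sharing neither column space nor row space sum to a rank-two matrix, and a brief case analysis globalises this), so citing it is a legitimate appeal to a known lemma rather than a gap.
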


This result was subsequently generalized by Schur~\cite{Shu}, who gave a characterisation of
maps preserving
 all subdeterminants of any fixed order~$r$. Later Dieudonn\'e~\cite{Di2}
proposed a new approach to classify such transformations, based on
the fundamental theorem of projective geometry. Dieudonn\'e obtained
a standard characterization of bijective linear 
maps preserving
the set of singular matrices over arbitrary fields.

Following on from these initial investigations, many authors have studied the problem of determining the linear operators on the $n \times n$ matrix algebra $M_n(F)$ over a field $F$ that
leave certain matrix relations, subsets, or properties invariant
(see the surveys~\cite{LT,P} for the details). In the last two decades much attention has been paid to the investigation of maps preserving
different
 invariants for matrices over various \emph{semirings}, where completely different
techniques are necessary to obtain a classification of linear transformations
with certain preserving properties; see~\cite[Section
9.1]{P} and references therein for more details.

In this paper we consider Green's relations for the full monoids of $n \times n$ matrices over anti-negative semifields, which include most notably the tropical and boolean semirings. Green's relation are five equivalence relations ($\L$, $\R$, $\H$, $\D$ and $\J$) and three pre-orders ($\le_\R$, $\le_\L$ and $\le_J$) which can be defined on any semigroup. They encapsulate the divisibility relations between elements and hence together describe the structure of the maximal subgroups and principal left, right, and two-sided ideals of the semigroup.  Green's relations play an important
role in many aspects of semigroup theory.  A natural problem is to characterize all bijective linear maps which preserve each of the orders or equivalence  relations defined by Green. In Section 2 we briefly recall the necessary definitions,
pointing out
that in the case of the tropical semiring (or more generally any anti-negative semiring without zero divisors \cite[Theorem 2.14]{BG}) the bijective linear maps have a very restricted form. In Section 3 we give a complete characterisation of those bijections which preserve each of the relations $\L$, $\R$, $\D$, $\J$ and the pre-orders $\le_\R$, $\le_\L$ and $\le_\J$. In Section 4 we characterise the bijections which preserve the relation $\H$ in $M_n(\S)$ for some restricted classes of anti-negative semifields, including the Boolean and tropical semifields.

\section{Green's relations and factor rank in matrix semigroups}
Let $\M$ be a monoid. For $a,b\in\M$ we say that:
\begin{itemize}
\item[(i)] $a\le_\R b$ if and only if $a\M\subseteq b\M$, that is, if there exists $s\in \M$ with $a=bs$. We say that $a\R b$ if $a\le_\R b$ and $b\le_\R a$, or in other words if $a$ and $b$ generate the same principal right ideal of $\M$.
\item[(ii)] $a\le_\L b$ if and only if $\M a\subseteq \M b$, that is, if there exists $s\in \M$ with $a=sb$. We say that $a\L b$ if $a\le_\L b$ and $b\le_\L a$, or in other words if $a$ and $b$ generate the same principal left ideal of $\M$.
\item[(iii)] $a\le_\J b$ if and only if $\M a\M\subseteq \M b\M$, that is, if there exist $s,t\in \M$ with $a=sbt$. We say that $a\J b$ if $a\le_\J b$ and $b\le_\J a$, or in other words, if $a$ and $b$ generate the same principal two-sided ideal of $\M$.
\item[(iv)] $a\D b$ if and only if there exists $ c \in \M$ such that $a\R c$ and $c\L b$ (or equivalently, if there there exists $c \in \M$ such that $a\L c$ and $c\R b$ ).
\item[(v)] $a\H b$ if and only if $a\R b$ and $a\L b$.
 \end{itemize}

We recall that a \emph{semiring} is a set ${\mathcal S}$ together with two binary operations, addition and multiplication, such that $\S$ is a commutative monoid under addition (with identity denoted by $0_{\S}$); $\S$ is a semigroup under multiplication (with identity, if any, denoted by $1_{\S}$); multiplication is distributive over addition on both sides; and $s0_{\S}=0_{\S}s=0_{\S}$ for all $s\in \S$. The semiring $\S$ is said to be \textit{commutative} if the multiplication is commutative. It is said to be \emph{anti-negative} if $a + b = 0_{\S}$ implies that both $a$ and $b$ are equal to $0_{\S}$. It is a \textit{semifield} if $\S \setminus \lbrace 0_{\S} \rbrace$ is an abelian group under multiplication.

\begin{remark}\label{rem_2element}
If $S  = \lbrace 0_{\S}, 1_{\S} \rbrace$ is a 2-element semifield then it is easy to see that the axioms determine the values of all sums and products except for $1_{\S} + 1_{\S}$.
Setting $1_{\S} + 1_{\S} = 0_{\S}$ yields the field $\mathbb{Z}_2$, while setting $1_{\S} + 1_{\S} = 1_{\S}$ yields the \textit{boolean semiring}, which we shall denote by $\B$. Since $\mathbb{Z}_2$ is not anti-negative and the definition of a semifield implies $0_{\S} \neq 1_{\S}$, this means that $\B$  is the unique anti-negative semifield with strictly fewer than three elements. Some of our results about anti-negative semifields will be established by one argument requiring the ability to take three distinct elements, and a separate argument for the special case of $\B$.
\end{remark}

\begin{proposition}\label{prop_notallroots}
Every anti-negative semifield $\S$ except for $\mathbb{B}$ contains an invertible element $k$ such that $k^2 \neq 1_S$.
\end{proposition}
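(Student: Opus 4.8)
The plan is to argue by contradiction. Suppose that every invertible element $k$ of $\S$ satisfies $k^2 = 1_\S$. Since $\S$ is a semifield, the invertible elements are exactly the nonzero ones, so this assumption reads $x^2 = 1_\S$ for all $x \in \S \setminus \{0_\S\}$; in other words, the multiplicative group $\S \setminus \{0_\S\}$ has exponent two. I will show that, together with anti-negativity, this forces $\S = \B$, contradicting the hypothesis.

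The decisive first step is to prove that the addition of $\S$ is idempotent, i.e. $1_\S + 1_\S = 1_\S$. Anti-negativity is what makes this possible: since $1_\S \neq 0_\S$, every finite sum of copies of $1_\S$ is again nonzero, hence invertible, and so squares to $1_\S$ by the standing assumption. Writing $n$ for the $n$-fold sum $1_\S + \cdots + 1_\S$ and applying this to $2$ and $3$ gives $4 = 1_\S$ and $9 = 1_\S$; a short manipulation then yields the claim (for instance, $4 = 1_\S$ gives $8 = 2\cdot 4 = 2$, hence $9 = 8 + 1_\S = 2 + 1_\S = 3$, so $3 = 1_\S$; then $1_\S = 4 = 3 + 1_\S = 1_\S + 1_\S$). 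Distributivity immediately promotes this to $x + x = x$ for every $x \in \S$.

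With idempotent addition established, I would introduce a third element. Because $\S \neq \B$, Remark~\ref{rem_2element} guarantees $|\S| \geq 3$, so I may fix a nonzero $a \neq 1_\S$. Then $1_\S + a$ is nonzero by anti-negativity, hence invertible, so $(1_\S + a)^2 = 1_\S$. Expanding the square and using $a + a = a$, $a^2 = 1_\S$ and $1_\S + 1_\S = 1_\S$ collapses $(1_\S + a)^2$ to $1_\S + a$ itself, so comparing the two evaluations gives $1_\S + a = 1_\S$. Multiplying this identity by $a$ (and using $a^2 = 1_\S$) gives $a + 1_\S = a$; combined with $1_\S + a = 1_\S$ and commutativity of addition, this forces $a = 1_\S$, the desired contradiction.

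I expect the genuinely delicate point to be the derivation of $1_\S + 1_\S = 1_\S$, since this is precisely where both the semifield structure (to place the relevant sums inside the multiplicative group) and anti-negativity (to guarantee those sums are nonzero) are indispensable, and it is what distinguishes $\B$ from every other anti-negative semifield. Everything after that is a routine computation, the one subtlety being the appeal to Remark~\ref{rem_2element} to secure an element distinct from both $0_\S$ and $1_\S$.
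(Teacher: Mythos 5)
Your proof is correct, but it takes a genuinely different route from the paper's. The paper argues in one shot: it takes two \emph{distinct} non-zero elements $x,y$ (available by Remark~\ref{rem_2element}), squares $x+y$ to obtain $1_{\S} = 1_{\S} + xy + xy + 1_{\S}$, and then multiplies this identity by $x$ and by $y$ separately to get $x = x+y+y+x$ and $y = y+x+x+y$; since the two right-hand sides coincide by commutativity of addition, this forces $x = y$, a contradiction. You instead proceed in two stages: first you derive additive idempotency $1_{\S}+1_{\S}=1_{\S}$ from the identities $4 = 1_{\S}$ and $9 = 1_{\S}$ (squares of the two-fold and three-fold sums of $1_{\S}$, which are non-zero by anti-negativity and hence invertible), and only then bring in a third element $a \notin \lbrace 0_{\S}, 1_{\S}\rbrace$, collapse $(1_{\S}+a)^2$ to $1_{\S}+a$ to get $1_{\S}+a = 1_{\S}$, and multiply by $a$ to force $a = 1_{\S}$. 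The paper's argument is shorter and never needs idempotency; yours is slightly longer but isolates a structural fact of independent interest, namely that an anti-negative semifield whose multiplicative group has exponent $2$ must have idempotent addition (compare Remark~\ref{idmpt_tropical}), after which the contradiction is routine. Both proofs use Remark~\ref{rem_2element} and anti-negativity in exactly the same roles: to supply enough elements, and to guarantee that the relevant sums are non-zero and hence subject to the exponent-two hypothesis.
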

\begin{proof}
Assume for contradiction that $\S \neq \mathbb{B}$ is an anti-negative semifield in which all invertible elements square to give $1_{\S}$. Since
$\mathbb{B}$ is the unique anti-negative semifield with fewer than three elements (by Remark~\ref{rem_2element} above), we may choose distinct non-zero
elements $x, y \in \S$. Since $\S$ is anti-negative, $x+y$ is also non-zero. Commutativity of $\S$ together with our assumption yields:
$$1_{\S} = (x+y)(x+y) = x^2+xy+xy+y^2 = 1_{\S}+xy+xy+1_{\S}.$$
Multiplying both sides by $x$ and using $x^2 = 1_{\S}$ then gives $x = x+y+y+x$, while a dual argument gives $y = y+x+x+y$, contradicting the assumption that $x$ and $y$ are distinct.
\end{proof}

\begin{remark}\label{idmpt_tropical}
A semiring $\S$ is called \emph{idempotent} if $a + a = a$ for all $a \in \S$. An idempotent semiring is necessarily anti-negative; indeed if $a+b=0_{\S}$ then $0_{\S} = a+b = a+ a + b = a + 0_{\S} = a$ and a dual argument gives $0_{\S} = b$. For example, the \emph{tropical semifield} $\mathbb{R}_{{\rm max}}:=\mathbb{R} \cup \{-\infty\}$ together with addition given by taking the maximum and multiplication given by extending \textit{addition} of real numbers so as to make $-\infty$ a zero element, is an idempotent and hence anti-negative semifield.
\end{remark}

Let $\S$ be a semiring.  We write $\S^{i \times j}$ for the set of $i \times j$ matrices over $\S$, which forms an \emph{$\S$-module} in the obvious way. We write $M_n(\S)$ for $\S^{n \times n}$ viewed as a semigroup under the matrix multiplication induced by the operations in $\S$. If $\S$ contains a multiplicative identity element $1_{\S}$, then $M_n(\S)$ is a monoid, with obvious identity element.

Given an anti-negative semifield, our aim is to characterise those bijective $\S$-linear maps
$$T: M_n(\S) \rightarrow M_n(\S),$$
which preserve each of the pre-orders $\le_\R, \le_\L, \le_\J$ and each of the relations $\R, \L, \J, \D, \H$. We say that the map $T$ \textit{preserves} the relation ${\mathcal P}$ if $A{\mathcal P} B$ implies that  $T(A){\mathcal P} T(B)$ for all $A,B\in M_n(\S)$. We say that map  $T$ \textit{strongly preserves} ${\mathcal P}$ if for all $A,B\in M_n(\S)$ we have $A{\mathcal P} B$ if and only if $T(A){\mathcal P} T(B)$. For example, it is straightforward to verify that any transformation of the form $T(A)=PAQ$, where $P$ and $Q$ are invertible matrices over $\S$ will strongly preserve each of the described relations. The transposition transformation strongly preserves  $\le_\J$, $\J$, $\D$ and $\H$ but it does not preserve any of the other relations (the $\L$ and $\R$ relations being transposed).

We begin by recalling a number of module-theoretic characterisations of Green's relations for matrix semigroups. For $A \in M_n(\S)$ we write ${\rm Row}_{\S}(A)$ to denote the left $\S$-submodule of $\S^{1\times n}$ generated by the rows of $A$ and ${\rm Col}_{\S}(A)$ to denote the right $\S$-submodule of $S^{n\times 1}$ generated by the columns of $A$. The following statement is well-known and can be found in the literature
for example as \cite[Proposition 4.1]{HK}.

\begin{theorem} \label{thm:HK}
Let $\S$ be a semiring with multiplicative identity element,  and $n \in \mathbb{N}$. For $A, B \in M_n(\S)$ we have
\begin{itemize}
\item[(i)] $A \leq_\L B$ if and only if ${\rm Row}_{\S}(A) \subseteq {\rm Row}_{\S}(B)$;
\item[(ii)] $A \leq_\R B$ if and only if ${\rm Col}_{\S}(A) \subseteq {\rm Col}_{\S}(B)$;
\item[(iii)] $A \L B$ if and only if ${\rm Row}_{\S}(A) = {\rm Row}_{\S}(B)$;
\item[(iv)] $A \R B$ if and only if ${\rm Col}_{\S}(A) = {\rm Col}_{\S}(B)$;
\item[(v)] $A \H B$ if and only if ${\rm Row}_{\S}(A) = {\rm Row}_{\S}(B)$ and ${\rm Col}_{\S}(A) = {\rm Col}_{\S}(B)$.
\end{itemize}
\end{theorem}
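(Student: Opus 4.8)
The plan is to prove parts (i) and (ii) directly from the definitions of $\leq_\L$ and $\leq_\R$, and then to deduce parts (iii)--(v) formally. Since (ii) is the exact column/right-multiplication dual of (i), essentially all of the content lies in (i), and the remaining parts are bookkeeping.

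The key step I would isolate first is the translation between matrix multiplication and linear combinations of rows. Writing $B_k$ for the $k$-th row of $B$, the $i$-th row of a product $SB$ is $\sum_{k=1}^n S_{ik} B_k$, i.e. a left $\S$-linear combination of the rows of $B$ whose coefficients form the $i$-th row of $S$. Because $\S$ has an identity, each generator $B_k = 1_{\S} B_k$ lies in the module it generates, and the set $\{\sum_{k=1}^n s_k B_k : s_k \in \S\}$ is already closed under addition and under left scalar multiplication; hence it is a submodule and therefore equals ${\rm Row}_{\S}(B)$. With this identification the two implications of (i) are short. For the forward direction, if $A \leq_\L B$ then by definition $A = SB$ for some $S \in M_n(\S)$, so every row of $A$ is one of the above combinations and thus lies in ${\rm Row}_{\S}(B)$; as ${\rm Row}_{\S}(A)$ is generated by these rows, ${\rm Row}_{\S}(A) \subseteq {\rm Row}_{\S}(B)$. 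For the converse, if ${\rm Row}_{\S}(A) \subseteq {\rm Row}_{\S}(B)$ then each row satisfies $A_i = \sum_{k=1}^n s_{ik} B_k$ for suitable $s_{ik} \in \S$; assembling these coefficients into $S = (s_{ik})$ gives $A = SB$, so $A \leq_\L B$.

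Part (ii) then follows verbatim with rows replaced by columns, left multiplication by right multiplication, and $S$ placed on the right. Parts (iii) and (iv) are immediate consequences: by definition $A \L B$ means $A \leq_\L B$ and $B \leq_\L A$, which by (i) is equivalent to the pair of inclusions ${\rm Row}_{\S}(A) \subseteq {\rm Row}_{\S}(B)$ and ${\rm Row}_{\S}(B) \subseteq {\rm Row}_{\S}(A)$, i.e. to ${\rm Row}_{\S}(A) = {\rm Row}_{\S}(B)$; and dually for (iv) via (ii). Finally (v) follows at once from the definition $A \H B \iff A \L B$ and $A \R B$ together with (iii) and (iv).

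I do not expect a genuine obstacle here, as this is a standard characterisation. The only point requiring care is the module-theoretic identification of the generated submodule ${\rm Row}_{\S}(B)$ with the set of finite left linear combinations of its rows, where one must use the presence of $1_{\S}$ to ensure the generators themselves lie in the module. One should also note that the coefficient matrix $S$ produced in the converse of (i) is genuinely $n \times n$ over $\S$, which holds precisely because $B$ has exactly $n$ rows, so that the indexing matches and $S \in M_n(\S)$ as required by the definition of $\leq_\L$ in the monoid $\M = M_n(\S)$.
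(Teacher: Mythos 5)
Your proof is correct. Note, however, that the paper itself gives no proof of this statement: it is quoted as a well-known fact with a citation to Proposition 4.1 of the Hollings--Kambites paper \cite{HK}, so there is no internal argument to compare yours against. Your argument is the standard one underlying that cited result: identify ${\rm Row}_{\S}(B)$ with the set of left $\S$-linear combinations of the rows of $B$ (using $1_{\S}$ so that the generators themselves appear), observe that the rows of $SB$ are exactly such combinations with coefficients read off from the rows of $S$, and run the equivalence in both directions; parts (iii)--(v) then follow formally from the definitions of $\L$, $\R$ and $\H$ as you say. You handle the two points that genuinely need care in the semiring setting --- non-commutativity (left combinations for rows, right for columns) and the role of the multiplicative identity --- so the write-up would serve as a complete self-contained substitute for the citation.
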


For semirings (or even semifields) in general, the $\D$ and $\J$ relations are harder to characterise in terms of row and column spaces. However, characterisations are known for many important semirings including in particular the tropical semiring $\mathbb{R}_{\rm max}$ (see
\cite{ABG,HK,JK12,WJK}).

Recall that the {\em factor rank\/} $f(A)$ of a matrix $A \in \S^{n \times m}$ is the smallest positive integer $k$ such that $A=BC$ for some $B \in \S^{n \times k}$ and $C \in \S^{k \times m}$. By convention, a matrix with only zero entries has factor rank $0$. Factor rank is also
known as \textit{Schein rank} (especially over the Boolean semiring) or \textit{Barvinok rank} (especially over the tropical semiring).

\begin{remark}\label{rem_rank2} Let $\S$ be a semifield and suppose that $M \ \in \ M_n(\S)$ is a matrix with exactly four non-zero entries which are arranged in a square submatrix as follows:
$$\left(\begin{array}{c c} a & b \\c& d\end{array}\right).$$
Then it is easy to see that $M$ has factor rank at most $2$ and, moreover, $M$ has factor rank $2$ if and only if $ad \neq bc$.
\end{remark}

\begin{remark}\label{rem_factorgreen}
Factor rank respects the $\J$-order, in the sense that $A \leq_\J B$ implies $f(A) \leq f(B)$ (see for example \cite[Corollary~8.3]{JK12}). It follows
that it is an invariant of $\H-$, $\L$-, $\R-$, $\D-$ and $\J$-classes. Note that in general $A\le_\R B$ and $f(A)=f(B)$ do not suffice to conclude that $A\R B$. For example, let $\S$ be the semiring of non-negative integers, and let $A$ and $B$ be the $n\times n$ matrices with $A_{1,1}=2$, $B_{1,1}=1$ and all other entries equal to 0; then $A\le_\R B$ and $f(A)=f(B)$ but $A\not \!\! \R B$ since 2 is not invertible in $\S$.
\end{remark}

\section{Bijective linear transformations preserving Green's relations}
Let $E_{i,j}$ denote the matrix with $1_{\S}$ in the $(i,j)$th position and $0_{\S}$ elsewhere. It is clear that $M_n(\S)$ is a free $\S$-module of rank $n^2$ having the elements $E_{i,j}$ as basis. Thus any $\S$-linear transformation $T: M_n(\S) \rightarrow M_n(\S)$ is completely determined by the images $T(E_{i,j})$.

From now on we assume that $\S$ is an anti-negative semifield (such as for example the tropical or boolean semifield). In this case, it was shown  
in
\cite{BG} that the bijective $\S$-linear maps $T$ have a very restricted form. We write $[n]$ for the set $\lbrace 1, \dots, n \rbrace$.

\begin{lemma} \cite[Theorem 2.14]{BG}.
\label{lem:BG}
Let $\S$ be an anti-negative semifield and let $T:M_n(\S)\to M_n(\S)$ be an $\S$-linear transformation. The following are equivalent
\begin{itemize}
\item[(i)] $T$ is bijective;
\item[(ii)] $T$ is surjective;
\item[(iii)] There exists a permutation $\sigma \in {\rm Sym}([n]\times[n])$ and non-zero elements $\alpha_{i,j} \in \S$ such that $T(E_{i,j}) = \alpha_{i,j}E_{\sigma(i,j)}$ for all $i$ and $j$.
\end{itemize}
\end{lemma}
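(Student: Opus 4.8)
The plan is to prove the cycle of implications $(iii) \Rightarrow (i) \Rightarrow (ii) \Rightarrow (iii)$, with essentially all of the work concentrated in the last step. For $(iii) \Rightarrow (i)$ I would use that $\S$ is a semifield, so each nonzero scalar $\alpha_{i,j}$ is invertible; the $\S$-linear map sending $E_{\sigma(i,j)} \mapsto \alpha_{i,j}^{-1} E_{i,j}$ is then a two-sided inverse of $T$, so $T$ is bijective. The implication $(i) \Rightarrow (ii)$ is immediate. Everything of substance lies in $(ii) \Rightarrow (iii)$.

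For $(ii) \Rightarrow (iii)$ I would first relabel the $n^2$ basis matrices $E_{i,j}$ as $e_1, \dots, e_N$ with $N = n^2$, identifying $M_n(\S)$ with the free module $\S^N$, and write $T$ in coordinates as left multiplication by an $N \times N$ matrix $\mathbf{T} = (t_{lk})$ whose $k$-th column records $T(e_k)$. In this language the target $(iii)$ asserts exactly that $\mathbf{T}$ is a \emph{monomial matrix}: exactly one nonzero entry in each row and each column. The engine driving the argument is the combination of two cancellation-free features of an anti-negative semifield: a sum is $0_\S$ only if every summand is $0_\S$ (anti-negativity), and a product is $0_\S$ only if a factor is $0_\S$ (no zero divisors, since the nonzero elements form a group).

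Invoking surjectivity, for each coordinate vector $e_l$ I would pick $x^{(l)}$ with $\mathbf{T} x^{(l)} = e_l$. For each $m \neq l$ the $m$-th coordinate gives $\sum_k t_{mk} x^{(l)}_k = 0_\S$, so anti-negativity forces each product $t_{mk} x^{(l)}_k$ to vanish; hence $t_{mk} = 0_\S$ at every index $k$ with $x^{(l)}_k \neq 0_\S$. The $l$-th coordinate gives a sum equal to $1_\S \neq 0_\S$, so some index $k^*$ satisfies both $x^{(l)}_{k^*} \neq 0_\S$ and $t_{lk^*} \neq 0_\S$. Combining these two observations, column $k^*$ of $\mathbf{T}$ has its unique nonzero entry in row $l$; that is, some column of $\mathbf{T}$ is a nonzero scalar multiple of $e_l$.

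The final step is a counting argument, which I regard as the real crux. Writing $c(l) = k^*$, the construction produces for each of the $N$ rows a column that is a nonzero multiple of the corresponding coordinate vector, and distinct rows must yield distinct columns, since a single nonzero column cannot be a multiple of two different coordinate vectors. Thus $c$ is an injection from the $N$ row indices into the $N$ column indices, hence a bijection; consequently \emph{every} column of $\mathbf{T}$ is a nonzero scalar multiple of exactly one coordinate vector. Setting $\sigma = c^{-1}$ (translated back into $\mathrm{Sym}([n]\times[n])$) and letting the $\alpha_{i,j}$ be the resulting nonzero scalars yields $(iii)$. The main obstacle is precisely this passage from ``each coordinate direction is realised by some column'' to the bijectivity statement: surjectivity only guarantees existence of the columns $c(l)$ directly, and it is the pigeonhole step, forced by $\#\{\text{rows}\} = \#\{\text{columns}\} = N$, that upgrades this to full monomial form and simultaneously rules out any zero column.
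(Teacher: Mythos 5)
Your proof is correct. Note first that the paper itself contains no proof of this lemma: it is quoted directly from Beasley and Guterman \cite[Theorem 2.14]{BG} as an external result, so there is no internal argument to compare yours against, and what you have written is a genuine, self-contained replacement for that citation. Your cycle $(iii)\Rightarrow(i)\Rightarrow(ii)\Rightarrow(iii)$ is sound. In $(iii)\Rightarrow(i)$ the inverse map is well defined precisely because $\sigma$ is a permutation and each $\alpha_{i,j}$ is invertible in the semifield. In $(ii)\Rightarrow(iii)$ the two properties you isolate are exactly the right ones: anti-negativity gives that a finite sum vanishes only if every summand does, and the semifield axiom gives absence of zero divisors, so from $\mathbf{T}x^{(l)} = e_l$ you correctly conclude that there is a column index $k^*$ with $t_{lk^*}\neq 0_{\S}$, $x^{(l)}_{k^*}\neq 0_{\S}$, and $t_{mk^*} = 0_{\S}$ for all $m \neq l$, i.e.\ some column of $\mathbf{T}$ is a nonzero multiple of $e_l$. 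The pigeonhole step is also handled properly: a nonzero column cannot be a multiple of two distinct coordinate vectors, so $l \mapsto c(l)$ is injective, hence bijective on the $N = n^2$ indices, which forces every column of $\mathbf{T}$ to be monomial (and, as you note, simultaneously excludes zero columns); reading off $\sigma = c^{-1}$ and the scalars $\alpha_{i,j}$ gives $(iii)$. The only stylistic remark is that your coordinate identification of $M_n(\S)$ with $\S^{N}$ implicitly uses commutativity of $\S$ (so that linear maps are matrix multiplications), which holds here since $\S\setminus\{0_{\S}\}$ is an abelian group; it would be worth one sentence to say so.
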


The following fact  belongs to folklore; its proof for distributive lattices and some other algebraic structures can be  found in \cite{Skor} and \cite{Il'in}. It is formulated without a proof  in \cite[Lemma 1]{Psh} for arbitrary anti-negative semirings. For completeness, we provide here a reduction Lemma~\ref{lem:BG} in the case of anti-negative semifields.

\begin{corollary}
 Let $\S$ be an anti-negative semifield.  The invertible elements of the monoid $M_n(\S)$ are precisely the monomial matrices, that is, those matrices containing exactly one non-zero element in each row and in each column.
\end{corollary}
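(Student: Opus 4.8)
The plan is to prove the two inclusions separately, obtaining the non-obvious direction as a reduction to Lemma~\ref{lem:BG}, exactly as the surrounding text promises. The easy direction is to check directly that every monomial matrix is a unit. I would write such a matrix as $M = \sum_{i} \alpha_i E_{i,\pi(i)}$ for some permutation $\pi$ of $[n]$ and nonzero scalars $\alpha_i \in \S$. Since $\S$ is a semifield, each $\alpha_i$ is invertible, so I can form $N = \sum_i \alpha_i^{-1} E_{\pi(i), i}$; using $E_{i,\pi(i)}E_{\pi(j),j} = \delta_{i,j}E_{i,j}$ (injectivity of $\pi$), a short computation gives $MN = NM = \sum_i E_{i,i} = I$, so $M$ is invertible in $M_n(\S)$.

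For the converse, suppose $M \in M_n(\S)$ is invertible, with two-sided inverse $M^{-1}$, and consider the left-translation map $T: M_n(\S) \to M_n(\S)$ given by $T(X) = MX$. Because the nonzero elements of $\S$ form an abelian group, $\S$ is commutative, and hence $T$ is $\S$-linear; moreover $X \mapsto M^{-1}X$ is a two-sided inverse for $T$, so $T$ is bijective. Lemma~\ref{lem:BG} then applies, yielding a permutation $\sigma$ of $[n]\times[n]$ and nonzero scalars $\alpha_{i,j}$ with $M E_{i,j} = \alpha_{i,j} E_{\sigma(i,j)}$ for all $i,j$.

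The next step is to read off what this says about the entries of $M$. Computing directly, $M E_{i,j}$ is the matrix whose $j$-th column equals the $i$-th column of $M$ and whose remaining columns vanish. Since this must equal $\alpha_{i,j} E_{\sigma(i,j)}$, a matrix with a single nonzero entry, the $i$-th column of $M$ must contain exactly one nonzero entry: it cannot be the zero column, for then $M E_{i,j} = 0$ would contradict $\alpha_{i,j} \neq 0$, and it cannot contain two or more nonzero entries. Writing $r_i$ for the row index of this unique entry, I obtain $\sigma(i,j) = (r_i, j)$.

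Finally, I would exploit that $\sigma$ is a bijection of $[n]\times[n]$: the form $\sigma(i,j) = (r_i,j)$ forces $i \mapsto r_i$ to be injective, hence a permutation of $[n]$, so the nonzero entries sitting in distinct columns occupy distinct rows. Combined with the one-entry-per-column conclusion, this shows $M$ has exactly one nonzero entry in each row and each column, i.e. $M$ is monomial. The only place where genuine work beyond bookkeeping occurs is this last passage from ``one nonzero entry per column'' to ``one per row as well''; I expect it to be the main point of the argument, and it is precisely the bijectivity of the index permutation $\sigma$ supplied by Lemma~\ref{lem:BG} that delivers it.
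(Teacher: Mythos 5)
Your proof is correct, and it rests on the same key reduction as the paper's: observe that left translation $X \mapsto MX$ by an invertible matrix $M$ is a bijective $\S$-linear map, then apply Lemma~\ref{lem:BG}. Where you diverge is in the endgame. The paper, having obtained $M E_{i,j} = \alpha_{i,j}E_{\sigma(i,j)}$, writes $M = M(E_{1,1}+\cdots+E_{n,n}) = \alpha_{1,1}E_{\sigma(1,1)}+\cdots+\alpha_{n,n}E_{\sigma(n,n)}$ to conclude that $M$ has exactly $n$ non-zero entries, invokes the fact that an invertible matrix cannot have a zero row to place one entry in each row, and then repeats the entire argument with right translation $X \mapsto XM$ to obtain column-monomiality. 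You instead compute $ME_{i,j}$ explicitly as the matrix whose $j$-th column is the $i$-th column of $M$, which forces each column of $M$ to have exactly one non-zero entry (in row $r_i$, say) and pins down $\sigma(i,j)=(r_i,j)$; the injectivity of $\sigma$ then makes $i \mapsto r_i$ a permutation of $[n]$, yielding row-monomiality with no further input. Your version is somewhat more economical: it needs only one application of Lemma~\ref{lem:BG} rather than two, and it never needs the auxiliary observation about zero rows, since everything is extracted from the bijectivity of the index permutation $\sigma$. The paper's argument has the virtue of treating rows and columns symmetrically; yours gets both conclusions in a single pass. Either is a complete and valid proof.
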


\begin{proof}
Since every non-zero element of $\S$ is invertible it is clear that the monomial matrices are invertible; each can be written as a product of an invertible diagonal matrix and a permutation matrix. On the other hand, left multiplication by an invertible matrix $A$ yields a bijective $\S$-linear transformation on $M_n(\S)$, and so by Lemma \ref{lem:BG} there exist non-zero elements $\alpha_{i,j} \in \S$ and a permutation $\sigma \in {\rm Sym}([n] \times [n])$ such that $A \cdot E_{i,j} = \alpha_{i,j} E_{\sigma(i,j)}$ for all $i,j \in [n]$. Since $A = A \cdot (E_{1,1} + \cdots + E_{n,n}) = \alpha_{1,1} E_{\sigma(1,1)} + \cdots + \alpha_{n,n}E_{\sigma(n,n)}$, we deduce that $A$ must contain precisely $n$ non-zero entries. It now follows from the fact that $A$ is invertible (and hence cannot contain a zero row) that there must be exactly one non-zero entry in each row. An entirely similar argument, considering right multiplication by $A$, shows that $A$ must be column monomial.
\end{proof}

\begin{lemma} \label{lem:LR}
Let $\S$ be an anti-negative semifield and let $T:M_n(\S)\to M_n(\S)$ be a bijective $\S$-linear transformation. If $T$  preserves any one of $\L$, $\R$, $\le_\L$ or $\le_\R$, then there exist $\check{\sigma}, \hat{\sigma} \in {\rm Sym}([n])$ and non-zero elements $x_i, y_j \in S$ such that
$$T(E_{i,j}) = x_i y_j E_{\check{\sigma}(i),\hat{\sigma}(j)} \mbox { for all } i, j \in [n].$$
\end{lemma}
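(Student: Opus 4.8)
The plan is to build on Lemma~\ref{lem:BG}, which already gives $T(E_{i,j}) = \alpha_{i,j} E_{\sigma(i,j)}$ for a permutation $\sigma \in {\rm Sym}([n]\times[n])$ and non-zero scalars $\alpha_{i,j}$; the task is to show that $\sigma$ splits as a row-permutation together with a column-permutation and that the scalars factorise. I would first reduce the four hypotheses to two: since $\L \,=\, \le_\L \cap \ge_\L$, any map preserving $\le_\L$ also preserves $\L$, and likewise $\le_\R$ forces preservation of $\R$, so it suffices to treat $\L$ and $\R$. These two cases are interchanged by transposition, which swaps $\L$ with $\R$ and interchanges the roles of rows and columns, so I would prove the statement assuming $T$ preserves $\L$ and deduce the $\R$ case by the dual argument (replacing $T$ by $A \mapsto T(A^T)^T$). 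Writing $\sigma(i,j) = (\pi_j(i), \gamma(i,j))$ for the row and column components, the module description of $\L$ in Theorem~\ref{thm:HK} and the fact (Remark~\ref{rem_factorgreen}) that factor rank is constant on $\L$-classes will be the two main tools.

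Next I would determine the column structure. Because $E_{i,j}$ has row space $\S e_j$, Theorem~\ref{thm:HK} gives $E_{i,j} \,\L\, E_{k,j}$ for all $i,k$; applying the hypothesis and noting that $\alpha E_{p,q}$ has row space $\S e_q$, the column component $\gamma(i,j)$ must be independent of $i$. Writing it as $\hat\sigma(j)$, injectivity of $\sigma$ forces $\hat\sigma \in {\rm Sym}([n])$ and each $\pi_j \in {\rm Sym}([n])$, so that $T(E_{i,j}) = \alpha_{i,j} E_{\pi_j(i), \hat\sigma(j)}$.

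The crux, and the step I expect to be the main obstacle, is to prove that $\pi_j$ is independent of $j$; this is genuinely delicate because preservation of $\L$ controls only row spaces, whereas the row positions are \emph{a priori} invisible to it, and factor rank is the only available bridge. Suppose $\pi_j \neq \pi_l$ for some $j \neq l$, and choose $i \neq k$ with $\pi_j(i) = \pi_l(k)$ (such indices exist precisely because $\pi_j \neq \pi_l$). For $n \geq 3$ one may also choose $i' \neq k'$ with $\pi_j(i') \neq \pi_l(k')$. The matrices $P = E_{i,j} + E_{k,l}$ and $P' = E_{i',j} + E_{k',l}$ both have row space $\S e_j + \S e_l$, so $P \,\L\, P'$ and hence $f(T(P)) = f(T(P'))$; but $T(P)$ has both non-zero entries in the single row $\pi_j(i) = \pi_l(k)$, giving factor rank $1$, while $T(P')$ has its entries in distinct rows and distinct columns, giving factor rank $2$ --- a contradiction, so $\pi_j = \pi_l$. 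The case $n \leq 2$ must be treated separately: here the rank-$2$ representative above is unavailable, and instead one compares $E_{1,j} + E_{2,l}$ (image of factor rank $1$) with a suitable full-row-space matrix having all four entries non-zero, whose image can be forced to factor rank $2$; producing such a matrix uses invertibility of the non-zero elements together with the fact (from Proposition~\ref{prop_notallroots} and Remark~\ref{rem_2element}) that $\S = \B$ or $\S$ has at least three non-zero elements. Throughout these computations I would rely on the characterisation of factor-rank-$1$ matrices over an anti-negative semifield as exactly those whose support is a combinatorial rectangle, rather than on any determinant-type identity, since square roots need not exist in $\S$.

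Once $\pi_j = \check\sigma$ is independent of $j$, so that $T(E_{i,j}) = \alpha_{i,j} E_{\check\sigma(i), \hat\sigma(j)}$, it remains to factorise the scalars. For $i \neq k$ and $j \neq l$ the all-ones block $B = E_{i,j} + E_{i,l} + E_{k,j} + E_{k,l}$ has factor rank $1$ by Remark~\ref{rem_rank2} and is $\L$-equivalent to the single row $E_{i,j} + E_{i,l}$, whose image now genuinely lies in one row; hence $f(T(B)) = 1$. As $T(B)$ is the $2 \times 2$ block with entries $\alpha_{i,j}, \alpha_{i,l}, \alpha_{k,j}, \alpha_{k,l}$ sitting in rows $\check\sigma(i), \check\sigma(k)$ and columns $\hat\sigma(j), \hat\sigma(l)$, Remark~\ref{rem_rank2} yields $\alpha_{i,j}\alpha_{k,l} = \alpha_{i,l}\alpha_{k,j}$ for all indices. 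Setting $x_i = \alpha_{i,1}$ and $y_j = \alpha_{1,1}^{-1}\alpha_{1,j}$, and using that every non-zero element of the semifield is invertible, this multiplicative rank-one condition gives $\alpha_{i,j} = x_i y_j$, which completes the proof.
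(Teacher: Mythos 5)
Your overall skeleton matches the paper's (reduce to $\L$ via Lemma~\ref{lem:BG}, prove ``columns to columns'' from row spaces, prove ``rows to rows'' by a factor-rank contradiction, then factorise the scalars using rank $1$), and your columns-to-columns step and scalar factorisation are correct. Your rows-to-rows argument for $n \geq 3$ is in fact an attractive variant: comparing $P = E_{i,j}+E_{k,l}$ with $P' = E_{i',j}+E_{k',l}$ needs only that factor rank is an $\L$-class invariant, and it works uniformly over every anti-negative semifield, whereas the paper must invoke Proposition~\ref{prop_notallroots} to obtain $\lambda$ with $\lambda^2 \neq 1_{\S}$ and then run a separate argument for $\S = \B$.

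The gap is your $n = 2$ case, and it cannot be repaired as sketched. You ask for ``a suitable full-row-space matrix having all four entries non-zero'', but no such matrix exists over an anti-negative semifield: if $F \in M_2(\S)$ has all entries non-zero, then for $x,y \in \S$ the row-space element $x(F_{1,1},F_{1,2})+y(F_{2,1},F_{2,2})$ has first coordinate $0_{\S}$ iff $xF_{1,1} = yF_{2,1} = 0_{\S}$ iff $x = y = 0_{\S}$ (anti-negativity plus absence of zero divisors), iff its second coordinate is $0_{\S}$; hence $(1_{\S},0_{\S}) \notin {\rm Row}_{\S}(F)$ and the row space is never all of $\S^{1\times 2}$. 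Worse, for $n=2$ the $\L$-class of $E_{1,j}+E_{2,l}$ consists exactly of the monomial matrices, and the putative bad map sends every monomial matrix to a matrix supported on a single row, hence of factor rank $1$; so no factor-rank contradiction can be extracted from this $\L$-class at all, and your whole strategy of comparing against $E_{1,j}+E_{2,l}$ is dead on arrival. A correct treatment of $n=2$ must compare matrices with equal but \emph{non-full} row spaces and all entries non-zero, which is precisely what the paper does: take $A$ and $B$ to be the preimages of the all-ones block and of the block with $\lambda$ on the diagonal and $1_{\S}$ off it, where $\lambda^2 \neq 1_{\S}$; then $A$ and $B$ have proportional rows, so $A \L B$, while $f(T(A)) = 1 \neq 2 = f(T(B))$ --- and over $\B$, where no such $\lambda$ exists (and where even the images of the two monomial classes have equal row spaces), one needs the paper's separate comparison of $E_{i,j}+E_{i,j'}$ with the matrix whose columns $j,j'$ are all ones. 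A related small point: your claimed characterisation of factor-rank-$1$ matrices as ``exactly those whose support is a combinatorial rectangle'' is false --- a $2\times 2$ matrix with all entries $1_{\S}$ except one entry $\lambda \neq 1_{\S}$ has rectangular support but factor rank $2$ by Remark~\ref{rem_rank2}. You only ever use the true direction (rank $1$ implies rectangular support), so your $n \geq 3$ argument stands, but it is the same field-style intuition that produced the $n=2$ gap.
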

\begin{proof}
We prove the result for  $\L$; the claim for $\le_\L$ follows since a map preserving $\le_\L$ must clearly preserve $\L$, while the statements for $\R$ and $\le_\R$ are dual.

By Lemma~\ref{lem:BG}  there exist elements $\alpha_{i,j} \neq 0_{\S} \in \S$ for $i,j \in [n]$ and $\sigma \in {\rm Sym}([n]\times [n])$ such that $T(E_{i,j}) = \alpha_{i,j}E_{\sigma(i,j)}$ for all $i,j \in [n]$. It is an easy consequence of Theorem~\ref{thm:HK} that for any two non-zero elements $a,b \in \S$ and any $i,j,k,l \in [n]$ we have $(aE_{i,j})\L (bE_{k,l})$ if and only if $j=l$. Thus if $T$ preserves $\L$ then there exists $\hat{\sigma} \in {\rm Sym}([n])$ such that
$$\{\sigma(1,j), \ldots, \sigma(n,j)\} = \{(1, \hat{\sigma}(j)), \ldots, (n, \hat{\sigma}(j))\}$$
for each $j$. (Informally, this means that $T$ maps `columns to columns', in the sense that the images of any two matrix units $E_{i,j}$ and $E_{k,j}$ must have their non-zero entries in columns with the same index.)

We claim that there exists $\check{\sigma} \in {\rm Sym}([n])$ such that $\sigma(i,j) = (\check{\sigma}(i), \hat{\sigma}(j))$ for all $i$ and $j$ (so that $T$ also maps `rows to rows'). Suppose not. Then there exist $i,j,j' \in[n]$ with $j \neq j'$ such that $\sigma(i,j) = (m ,\hat{\sigma}(j))$ and $\sigma(i,j') = (m', \hat{\sigma}(j'))$ for some $m \neq m'$. (In other words, the images of $E_{i,j}$ and $E_{i,j'}$ have their non-zero entries in different rows.)
To derive the desired contradiction, we now consider separately the cases where $|\S| \geq 3$ and where $\S = \B$ is the boolean semiring (see Remark~\ref{rem_2element} above).

Suppose first that $|\S| \geq 3$. By Proposition \ref{prop_notallroots} we may choose $\lambda \in \S$ such that $\lambda \ne 0_{\S}, 1_{\S}$ and $\lambda^2 \neq 1_{\S}$. Consider
\begin{eqnarray*}
A&=&T^{-1}(E_{m,\hat{\sigma}(j)}+ E_{m',\hat{\sigma}(j)} + E_{m,\hat{\sigma}(j')} + E_{m',\hat{\sigma}(j')}),\\
B&=&T^{-1}(\lambda E_{m,\hat{\sigma}(j)}+ E_{m',\hat{\sigma}(j)} + E_{m,\hat{\sigma}(j')} + \lambda E_{m',\hat{\sigma}(j')}).
\end{eqnarray*}
Since $m \neq m'$, $\hat{\sigma}(j) \neq \hat{\sigma}(j')$ and $\lambda^2 \neq 1_{\S}$, it is clear that $f(T(A))= 1$ and $f(T(B))=2$. Since factor rank is an $\mathcal{L}$-class invariant we see that $T(A)\not \!\! \L T(B)$. On the other hand, it follows easily from the definitions above that there exist non-zero $a,b,c,d \in \S$ and $k,l \neq i$ such that
$$A=aE_{i,j}+ bE_{k,j} + cE_{l,j'} + dE_{i,j'}, \quad B=\lambda aE_{i,j}+ bE_{k,j} + cE_{l,j'} + \lambda dE_{i,j'}.$$
Now using the fact that $\lambda$ is invertible in $\S$, we see that $A$ and $B$ have the same row space, and so by Theorem~\ref{thm:HK} we have $A \L B$, providing the required contradiction.

Now consider the case where $\S = \B$. Let $A = E_{i,j} + E_{i,j'}$ and $B$ be the matrix with $1_S$ in every entry in the $j$th and $j'$th columns and $0_S$ elsewhere. Then $A$ and $B$ have the same row space and so by Theorem~\ref{thm:HK} are $\L$-related. However, it is easy to see that the row space of $T(A)$ does not coincide with the row space of $T(B)$, which by Theorem~\ref{thm:HK} means that $T(A) \not\!\!\!{\L} T(B)$, again giving a contradiction.

It remains to show that we can find elements $x_i, y_j \in \S$ for $i,j \in [n]$ with $\alpha_{i,j} = x_i y_j$. (Notice that since each $\alpha_{i,j}$ is non-zero this will also imply that the $x_i$'s and $y_j$'s are non-zero.) If we let $R$ be the matrix with $R_{i,j} = \alpha_{i,j}$, what we wish to show is that $R$ has factor rank $1$. Note that for arbitrary permutation matrices $P, Q \in M_n(\S)$ it is easy to see that $R$ has factor rank $1$ precisely if $PRQ$ has factor rank $1$.

Let $C$ be the matrix whose entries are all $1_S$ and $D$ the matrix whose first row entries are all $1_S$ and other entries all $0_S$. Then $C$ and $D$ clearly have the same row space and so by Theorem~\ref{thm:HK} are $\L$-related. Since $T$ preserves $\L$, this means $T(C) \L T(D)$, and hence by Theorem~\ref{thm:HK} again, $T(C)$ and $T(D)$ have the same row space. However, from the defining properties of the $\alpha_{i,j}$ it is easy to see that $T(D)$ has only one non-zero row, which is equal to the $\check{\sigma}(1)$th row of $T(C)$. Moreover, $T(C) = PRQ$ for permutation matrices $P, Q \in M_n(\S)$ corresponding to the permutations $\check{\sigma}$ and $\hat{\sigma}$ respectively. Thus, every row of $T(C)$ must be a multiple of the $\check{\sigma}(1)$th row of $T(C)$, showing that $T(C)$, and hence $R$, has factor rank $1$, as required.
\end{proof}

\begin{lemma} \label{lem:D}
Let $\S$ be an anti-negative semifield and let $T:M_n(\S)\to M_n(\S)$ be a bijective $\S$-linear transformation. If $T$  preserves any of the relations $\D$, $\J$ or $\leq_\J$ then there exist $\check{\sigma}, \hat{\sigma} \in {\rm Sym}([n])$ and non-zero elements $x_i, y_j \in \S$ for $i, j \in [n]$ such that either
$$T(E_{i,j}) = x_i y_j E_{\check{\sigma}(i),\hat{\sigma}(j)} \mbox{ for all } i,j \in [n]; \mbox{  or }$$
$$T(E_{i,j}) = x_i y_j E_{\hat{\sigma}(j),\check{\sigma}(i)} \mbox{ for all } i,j \in [n].$$
\end{lemma}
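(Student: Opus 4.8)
The plan is to combine the normal form from Lemma~\ref{lem:BG} with the observation that each of the three hypotheses forces $T$ to preserve the property of having factor rank $1$, and then to run a purely combinatorial rigidity argument on the underlying permutation of matrix positions. By Lemma~\ref{lem:BG} I may write $T(E_{i,j}) = \alpha_{i,j}E_{\sigma(i,j)}$ with each $\alpha_{i,j} \neq 0_\S$ and $\sigma \in {\rm Sym}([n]\times[n])$; the whole task is to determine $\sigma$ and to show that $\alpha_{i,j}$ factors as $x_i y_j$. First I would reduce the three hypotheses to two: preservation of $\le_\J$ immediately gives preservation of $\J$ (apply the hypothesis in both directions), so it suffices to treat the cases in which $T$ preserves $\D$ or preserves $\J$.

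The key step, which I expect to be the main obstacle, is to show that $T$ maps factor rank $1$ matrices to factor rank $1$ matrices. Since the $\D$ and $\J$ relations have no convenient row/column-space description in general, I would first establish by hand that all matrices of factor rank exactly $1$ lie in a single $\D$-class, and hence (as $\D \subseteq \J$) in a single $\J$-class. Indeed, given non-zero factor rank $1$ matrices $A$ and $B$, Theorem~\ref{thm:HK} lets me interpose the matrix $C$ built from the column part of $A$ and the row part of $B$, so that ${\rm Col}_{\S}(C) = {\rm Col}_{\S}(A)$ and ${\rm Row}_{\S}(C) = {\rm Row}_{\S}(B)$; then $A \R C$ and $C \L B$, whence $A \D B$. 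In particular every factor rank $1$ matrix is $\D$- and $\J$-related to $E_{1,1}$. As factor rank is an invariant of $\D$- and $\J$-classes (Remark~\ref{rem_factorgreen}) and $T(E_{1,1})$ has factor rank $1$, applying the relevant preservation hypothesis gives $f(T(A)) = 1$ whenever $f(A) = 1$.

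With rank-$1$ preservation in hand, I would extract the combinatorial rigidity. A sum of two distinct matrix units has factor rank $1$ exactly when the two positions lie in a common row or common column, and factor rank $2$ otherwise. Applying $T$ to $E_{i,j} + E_{i,l}$, and dually to $E_{i,j} + E_{k,j}$, therefore shows that $\sigma$ sends any two positions sharing a line to two positions sharing a line. A short argument then shows that a set of grid positions which is pairwise collinear must lie in a single line; hence the image under $\sigma$ of each full row is contained in, and by bijectivity equal to, a single full line (a row or a column), and likewise for each full column. Since the images of distinct rows are disjoint while any full row and any full column meet, the rows cannot map partly to rows and partly to columns. This forces exactly one of two global patterns, namely $\sigma(i,j) = (\check\sigma(i), \hat\sigma(j))$ for permutations $\check\sigma, \hat\sigma \in {\rm Sym}([n])$, or the transposed pattern $\sigma(i,j) = (\hat\sigma(j), \check\sigma(i))$.

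Finally I would factor the scalars. Let $R = (\alpha_{i,j})$ and let $C$ be the matrix all of whose entries equal $1_\S$, which has factor rank $1$. Then $T(C)$ coincides with $R$ (or with $R^T$ in the transposed case) after applying the row and column permutations $\check\sigma$ and $\hat\sigma$, and permutations and transposition preserve factor rank; hence $f(R) = f(T(C)) = 1$ by the rank-$1$ preservation established above. Thus $R$ has factor rank $1$, which is precisely the statement that $\alpha_{i,j} = x_i y_j$ for suitable non-zero elements $x_i, y_j \in \S$, completing both cases.
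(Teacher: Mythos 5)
Your proof is correct, and its overall skeleton (the normal form from Lemma~\ref{lem:BG}, collinearity of the images of matrix units, the disjointness argument giving the standard/transpose dichotomy, and finally factor rank $1$ for the coefficient matrix $R$) matches the paper's; however, the engine you use to produce the rank obstructions is genuinely different. The paper works with the pre-order $\leq_\J$ throughout: it observes that $(E_{i,j}+E_{i,j'})\,\R\, E_{i,j}$ implies all three relations between the images, that $\leq_\J$ is the weakest of $\D$, $\J$, $\leq_\J$, and then invokes monotonicity of factor rank along $\leq_\J$ (Remark~\ref{rem_factorgreen}); the same mechanism, applied to the all-ones matrix $C$ and the single-row matrix $D$ with $C \L D$, yields the factorisation of the coefficients at the end. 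You instead prove a structural fact not stated in the paper --- that the non-zero factor rank $1$ matrices form a single $\D$-class, obtained via Theorem~\ref{thm:HK} by interposing $C=uy$ between $A=uv$ and $B=xy$ (correct over a semifield, since non-zero entries of $u,v,x,y$ are invertible) --- reduce $\leq_\J$-preservation to $\J$-preservation by applying the hypothesis in both directions, and conclude that $T$ preserves the set of rank-$1$ matrices; both the collinearity step and the final step (the all-ones matrix has rank $1$, hence so do $T(C)$ and $R$) then follow uniformly from this single intermediate statement. Your route buys a cleaner ending (no auxiliary matrix $D$ and no appeal to order-monotonicity of rank --- only its invariance on $\D$- and $\J$-classes is needed), at the cost of the extra lemma on the $\D$-class of rank-one matrices; the paper's route avoids that lemma but leans on the stronger fact that $A \leq_\J B$ implies $f(A) \leq f(B)$. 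Both arguments are valid for every anti-negative semifield.
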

\begin{proof}
Suppose $T$ preserves one of the given relations.
By Lemma~\ref{lem:BG}  there exist non-zero $\alpha_{i,j} \in \S$ for $i,j \in [n]$ and $\sigma \in {\rm Sym}([n]\times [n])$ such that $T(E_{i,j}) = \alpha_{i,j}E_{\sigma(i,j)}$ for all $i,j \in [n]$.

We claim that for any $i,j,j' \in [n]$, the matrices $T(E_{i,j})$ and $T(E_{i,j'})$ have their non-zero entries in either the same row or the same column. Indeed,
consider the matrices $E_{i,j}$ and $E_{i,j}+E_{i,j'}$. These clearly have the same column space, so by Theorem~\ref{thm:HK} we have
$(E_{i,j} + E_{i,j'} )\R E_{i,j}$ and hence also $(E_{i,j} + E_{i,j'}) \D E_{i,j}$,
$(E_{i,j} + E_{i,j'}) \J E_{i,j}$ and
$(E_{i,j} + E_{i,j'}) \leq_\J E_{i,j}$. Since $T$ preserves one of the latter three relations, we have one of
$T(E_{i,j} + E_{i,j'}) \D T(E_{i,j})$,
$T(E_{i,j} + E_{i,j'}) \J T(E_{i,j})$ and
$T(E_{i,j} + E_{i,j'}) \leq_\J T(E_{i,j})$, so we must have
$T(E_{i,j} + E_{i,j'}) \leq_\J T(E_{i,j})$ since this is the weakest of the three possibilities. Since factor rank respects the $\mathcal{J}$-order (Remark~\ref{rem_factorgreen}) we note that the factor rank of $T(E_{i,j} + E_{i,j'})$ cannot exceed the factor rank of $T(E_{i,j})$, which is clearly seen to be $1$. The
only way this can happen is if the two non-zero entries of
$T(E_{i,j} + E_{i,j'}) = T(E_{i,j}) + T(E_{i,j'})$ lie in either the same row or the same column.

A similar argument (using $\L$ instead of $\R$) establishes a dual claim that for any $i,i',j \in [n]$, $T(E_{i,j})$ and $T(E_{i'j})$ have their non-zero entries in either the same row or the same column.

Consider the sets of matrices of the form $\lbrace E_{i,k} \mid k \in [n] \rbrace$ (which we
will call the \textit{$i$th row set}) and of the form $\lbrace E_{k,j} \mid k \in [n] \rbrace$ (which we will call the $j$th \textit{column set}). A simple inductive argument using the two dual claims above shows that $T$ must map every column or row set to either a column or a row set. In fact, we claim that $T$ must either
\begin{itemize}
\item map row sets to row sets and column sets to column sets (which we call the \textit{standard case}); or
\item map row sets to column sets and columns sets to row sets (which we call the \textit{transpose case}).
\end{itemize}
Indeed, if it did neither of these things, then it would have to map one row set to a row set and another row set to a column set; since the different row sets are disjoint but every row set intersects every column set, this would contradict the fact that $T$ is a bijection.

In the standard case, we define $\check\sigma$  and $\hat\sigma$ to be such that $T$ maps the
$i$th row set to the $\check\sigma(i)$th row set and the $j$th column set to the $\hat\sigma(j)$th column set. Since $T$ is a bijection,
they are permutations of $n$, and it is easy to see that $\sigma(i,j) = (\check\sigma(i),  \hat\sigma(j))$. In the transpose case we define them so that
$T$ maps the $i$th row set to the $\check\sigma(i)$th column set and the
$j$th column set to the $\hat\sigma(j)$th row set; again they are permutations and this time
$\sigma(i,j) = (\hat\sigma(j), \check\sigma(i))$.

It remains to show that we can find elements $x_i$ and $y_i$ with $\alpha_{i,j} = x_i y_j$.  In the standard case, we define $R$, $C$ and $D$ exactly as in the proof of Lemma~\ref{lem:LR}. The same argument as used there shows that $C \L D$ and hence $C \D D$, $C \J D$ and $C \leq_\J D$.
By assumption $T$ preserves at least one of the latter three relations, so we have at least one of $T(C) \D T(D)$, $T(C) \J T(D)$ and $T(C) \leq_\J T(D)$, which means we must have $T(C) \leq_\J T(D)$ since this is the weakest of the three possibilities. Now $T(D)$ has exactly one non-zero row, so must have factor rank $1$, while $T(C) = PRQ$, for some permutation matrices $P, Q \in M_n(\S)$. Since factor rank respects the $\mathcal{J}$-order
(Remark~\ref{rem_factorgreen}) it follows that $T(C)$, and hence $R$, has factor rank at most one $1$, which is exactly what we required.

The transpose case is treated by a very similar argument. In particular, $D$ is taken this time to be the matrix with $1_{\S}$ in the first column, we deduce that $C \R D$, that $T(C) \leq_\J T(D)$ and that $T(D)$ has exactly one non-zero row, and hence again that $T(C) = (PRQ)^T$ has factor rank at most $1$.
\end{proof}

\begin{theorem} \label{t1}
Let $\S$ be an anti-negative semifield and $T:M_n(\S)\to M_n(\S)$ a bijective $\S$-linear transformation. Then the following are equivalent:
\begin{itemize}
\item[(i)] $T$ preserves $\R$;
\item[(ii)] $T$ preserves $\L$;
\item[(iii)] $T$ preserves $\le_\R$;
\item[(iv)] $T$ preserves $\le_\L$;
\item[(v)] there exist invertible (monomial) matrices $P,Q\in M_n(\S)$ such that for all $X\in M_n(\S)$, we have $T(X)=PXQ$.
\end{itemize}
\end{theorem}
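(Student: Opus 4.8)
The plan is to deduce everything from Lemma~\ref{lem:LR}, which already performs the substantive work of extracting the multiplicative structure of $T$ from any single preservation hypothesis. The logical skeleton I would use is to show that each of (i)--(iv) implies (v) and that (v) implies each of the others. The implication (v)$\Rightarrow$(i)--(iv) is immediate and was already observed in Section~2: for invertible $P,Q$ the map $X \mapsto PXQ$ \emph{strongly} preserves every one of Green's relations and pre-orders, hence in particular preserves each of $\R$, $\L$, $\le_\R$ and $\le_\L$. Moreover (iii)$\Rightarrow$(i) and (iv)$\Rightarrow$(ii) hold trivially, since any map preserving a pre-order preserves its associated equivalence. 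It therefore suffices to treat the four hypotheses (i)--(iv) uniformly, and this is exactly the setting of Lemma~\ref{lem:LR}: whichever of the four holds, that lemma furnishes permutations $\check\sigma, \hat\sigma \in {\rm Sym}([n])$ and non-zero scalars $x_i, y_j \in \S$ with $T(E_{i,j}) = x_i y_j E_{\check\sigma(i),\hat\sigma(j)}$ for all $i,j \in [n]$.

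The only remaining task is to repackage this as a two-sided multiplication. I would let $P$ be the monomial matrix with entry $x_i$ in position $(\check\sigma(i), i)$ and $Q$ the monomial matrix with entry $y_j$ in position $(j, \hat\sigma(j))$. A direct computation then gives $P E_{i,j} Q = x_i y_j E_{\check\sigma(i),\hat\sigma(j)} = T(E_{i,j})$ for every $i,j$, since left multiplication by $P$ sends the unit $E_{i,j}$ to $x_i E_{\check\sigma(i),j}$ and right multiplication by $Q$ then places the surviving entry in column $\hat\sigma(j)$ scaled by $y_j$. Because both $T$ and $X \mapsto PXQ$ are $\S$-linear and agree on the free basis $\{E_{i,j}\}$ of $M_n(\S)$, they coincide on all of $M_n(\S)$, which is precisely statement (v); and $P, Q$ are monomial with non-zero entries, hence invertible by the Corollary following Lemma~\ref{lem:BG}.

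There is essentially no obstacle left at this level, since the genuine difficulty is already encapsulated in Lemma~\ref{lem:LR}: namely, proving that $T$ must send matrix units to scalar multiples of matrix units in a rows-to-rows, columns-to-columns fashion, and that the resulting scalar array $(\alpha_{i,j})$ factors as a rank-one product $x_i y_j$ (established there via the factor-rank arguments that separate the case $|\S| \ge 3$ from $\S = \B$). The one point I would check carefully is the matrix identity $P E_{i,j} Q = x_i y_j E_{\check\sigma(i),\hat\sigma(j)}$ together with the fact that a single pair $P, Q$ works simultaneously for all $i,j$ --- and this is possible precisely because the factorisation $\alpha_{i,j} = x_i y_j$ decouples the row-dependence (captured by $P$ and $\check\sigma$) from the column-dependence (captured by $Q$ and $\hat\sigma$).
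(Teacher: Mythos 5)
Your proof is correct and follows essentially the same route as the paper's own argument: both dispose of (v)$\Rightarrow$(i)--(iv) as routine, then invoke Lemma~\ref{lem:LR} uniformly for whichever of (i)--(iv) holds, and define the identical monomial matrices $P_{\check\sigma(i),i}=x_i$, $Q_{j,\hat\sigma(j)}=y_j$ so that $PE_{i,j}Q = x_i y_j E_{\check\sigma(i),\hat\sigma(j)} = T(E_{i,j})$, concluding by $\S$-linearity on the basis. No gaps; your extra remarks (the trivial implications (iii)$\Rightarrow$(i), (iv)$\Rightarrow$(ii) and the rank-one decoupling observation) are harmless additions to the same argument.
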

\begin{proof}
It is straightforward to verify (directly or using Theorem~\ref{thm:HK}) that (v) implies the other four conditions.

Conversely, suppose that one of (i), (ii), (iii) or (iv) holds. Let $\check{\sigma}$, $\hat{\sigma}$, $x_i$ and $y_i$ be as given by Lemma~\ref{lem:LR}.
Let $P$ and $Q$ be the monomial
matrices given by $P_{\check{\sigma}(i),i} = x_i$ and $Q_{j,\hat{\sigma}(j)} = y_j$. The map $M_n(\S) \to M_n(\S), X \mapsto PXQ$ is clearly $\S$-linear, so it
suffices to show that it agrees with $T$ on the basis elements $E_{i,j}$. For each such, from the definitions of $P$ and $Q$,
$$(P E_{i,j} Q)_{rs} = \begin{cases}
x_i y_j & \textrm{ if } r = \check{\sigma}(i) \textrm{ and } s = \hat{\sigma}(j) \\
0 & \textrm{ otherwise}.
\end{cases}$$
so that $P E_{i,j} Q = x_i y_j E_{\check{\sigma}(i), \hat{\sigma}(j)} = T(E_{i,j})$ by Lemma~\ref{lem:LR}.
\end{proof}

\begin{theorem}\label{t2}
Let $\S$ be an anti-negative semifield and $T:M_n(\S)\to M_n(\S)$ a bijective $\S$-linear transformation. Then the following are equivalent:
\begin{itemize}
\item[(i)] $T$ preserves $\D$;
\item[(ii)] $T$ preserves $\J$;
\item[(iii)] $T$ preserves $\le_\J$;
\item[(iv)] there exist invertible (monomial) matrices $P,Q\in M_n(\S)$ such that either $T(X)=PXQ$ for all $X \in M_n(\S)$, or $T(X)=P X^T Q$ for all $X \in M_n(\S)$.
\end{itemize}
\end{theorem}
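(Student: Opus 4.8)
The plan is to establish the equivalence as a single cycle of implications, with the substantive work already delegated to Lemma~\ref{lem:D}. First I would dispose of the implication (iv)$\Rightarrow$(i),(ii),(iii), which is immediate from observations already recorded in Section~2: any map $X \mapsto PXQ$ with $P,Q$ invertible strongly preserves every one of Green's relations, the transposition map $X \mapsto X^T$ strongly preserves each of $\D$, $\J$ and $\le_\J$, and a composite of relation-preserving maps again preserves the relation. Hence both forms appearing in (iv) preserve all three of $\D$, $\J$ and $\le_\J$.

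For the converse, suppose $T$ preserves one of $\D$, $\J$ or $\le_\J$. I would invoke Lemma~\ref{lem:D} to obtain permutations $\check{\sigma}, \hat{\sigma} \in {\rm Sym}([n])$ and non-zero scalars $x_i, y_j \in \S$ placing $T$ into one of two cases: the standard case $T(E_{i,j}) = x_i y_j E_{\check{\sigma}(i),\hat{\sigma}(j)}$, or the transpose case $T(E_{i,j}) = x_i y_j E_{\hat{\sigma}(j),\check{\sigma}(i)}$. The standard case is handled verbatim as in the proof of Theorem~\ref{t1}: setting $P_{\check{\sigma}(i),i} = x_i$ and $Q_{j,\hat{\sigma}(j)} = y_j$ produces monomial (hence invertible) matrices for which $PE_{i,j}Q = x_i y_j E_{\check{\sigma}(i),\hat{\sigma}(j)} = T(E_{i,j})$, and since both $T$ and $X \mapsto PXQ$ are $\S$-linear and agree on the basis $\{E_{i,j}\}$, they coincide.

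It remains to treat the transpose case, which is the only place new bookkeeping is required. Here I would define monomial matrices $P$ and $Q$ by $P_{\hat{\sigma}(j),j} = y_j$ and $Q_{i,\check{\sigma}(i)} = x_i$, and check the identity $PE_{i,j}^T Q = T(E_{i,j})$. Since $E_{i,j}^T = E_{j,i}$, a direct entrywise computation gives $(P E_{j,i} Q)_{r,s} = P_{r,j}Q_{i,s}$, which is non-zero precisely when $r = \hat{\sigma}(j)$ and $s = \check{\sigma}(i)$, in which case it equals $y_j x_i = x_i y_j$ using the commutativity of multiplication in $\S$. Thus $PE_{i,j}^T Q = x_i y_j E_{\hat{\sigma}(j),\check{\sigma}(i)} = T(E_{i,j})$. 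The map $X \mapsto PX^TQ$ is $\S$-linear (transposition is additive and commutes with scalar multiplication precisely because $\S$ is commutative) and agrees with $T$ on each basis element, so the two maps are equal, yielding (iv).

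The genuinely difficult content has already been extracted into Lemma~\ref{lem:D}, so I anticipate no serious obstacle at the level of the theorem itself; the only points demanding care are getting the row and column indices of $P$ and $Q$ correct in the transpose case, and verifying that $X \mapsto PX^TQ$ is genuinely $\S$-linear, which rests on the commutativity of the semifield $\S$.
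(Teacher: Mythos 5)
Your proposal is correct and follows essentially the same route as the paper: both directions are handled identically, with Lemma~\ref{lem:D} supplying the standard/transpose dichotomy, the standard case reduced to the argument of Theorem~\ref{t1}, and the transpose case settled by exactly the same choice of monomial matrices $P_{\hat{\sigma}(j),j} = y_j$, $Q_{i,\check{\sigma}(i)} = x_i$ and the same entrywise verification that $P(E_{i,j})^TQ = T(E_{i,j})$. Your explicit remark that $\S$-linearity of $X \mapsto PX^TQ$ uses commutativity of $\S$ is a point the paper leaves implicit, but it is the same proof.
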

\begin{proof}
Again, it is straightforward to verify (directly or using Theorem~\ref{thm:HK}) that (iv) implies the other three conditions. For the converse, suppose
one of (i), (ii) or (iii) holds and let $\check{\sigma}$, $\hat{\sigma}$, $x_i$ and $y_i$ be given this time by Lemma~\ref{lem:D}.
In the standard case (in the terminology of the proof of Lemma~\ref{lem:D}) define $P$ and $Q$ as in the proof of Theorem~\ref{t1} and we have $T(X) = PXQ$ for all $X$ by exactly the same argument but using Lemma~\ref{lem:D} in place of Lemma~\ref{lem:LR} at the end.

In the transpose case, define $P$ and $Q$ to be the monomial matrices with
$P_{\hat{\sigma}(j),j} = y_j$ and $Q_{i,\check{\sigma}(i)} = x_i$. The map $X \mapsto PX^TQ$ is easily seen to be $\S$-linear, and a simple calculation gives
$$(P (E_{i,j})^T Q)_{rs} = \begin{cases}
y_j x_i & \textrm{ if } r = \hat{\sigma}(j) \textrm{ and } s = \check{\sigma}(i) \\
0 & \textrm{ otherwise}.
\end{cases}$$
so that $P (E_{i,j})^T Q = x_i y_j E_{\hat{\sigma}(j), \check{\sigma}(i)} = T(E_{i,j})$ by Lemma~\ref{lem:D} again.
\end{proof}

Theorem~\ref{t2} motivates some further natural definitions. 

  We say that a map $T : M_n(\S) \to M_n(\S)$ \textit{exchanges} two binary relations $\tau$ and $\rho$ if $x \tau y \implies T(x) \rho T(y)$ and $x \rho y \implies T(x) \tau T(y)$. We say that it
\textit{strongly} exchanges $\tau$ with $\rho$ if $x \tau y \iff T(x) \rho T(y)$ and $x \rho y \iff T(x) \tau T(y)$. Notice that a map
$T : M_n(\S) \to M_n(\S)$ (strongly) preserves $\L$ and $\R$ if and only if the map $T' : X \mapsto T(X)^T$ (strongly) exchanges
$\L$ with~$\R$.

\begin{corollary}
Let $\S$ be an anti-negative semifield.
If a bijective $\S$-linear transformation on $M_n(\S)$ preserves any of $\D$, $\J$ and $\leq_\J$ then it:
\begin{itemize}
\item strongly preserves $\D$, $\J$, $\leq_\J$ and $\H$; and
\item \textbf{either} strongly preserves $\L$, $\R$, $\leq_L$ and $\leq_\R$ \textbf{or else} strongly exchanges $\L$ with $\R$ and strongly exchanges $\leq_{\L}$ with $\leq_{\R}$.
\end{itemize}
\end{corollary}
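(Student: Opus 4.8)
The plan is to apply Theorem~\ref{t2} to pin down the form of $T$, and then to read off the behaviour of every relation by decomposing $T$ into the two elementary building blocks whose effect on Green's relations is already understood from Section~2: two-sided multiplication by invertible matrices, and transposition. First I would invoke Theorem~\ref{t2}: since $T$ preserves one of $\D$, $\J$ or $\leq_\J$, there exist invertible monomial matrices $P, Q \in M_n(\S)$ with either $T(X) = PXQ$ for all $X \in M_n(\S)$ (the standard form) or $T(X) = P X^T Q$ for all $X \in M_n(\S)$ (the transpose form). In the standard form $T$ is itself a two-sided multiplication by invertible matrices, and, as observed in Section~2, every such map strongly preserves each of $\L$, $\R$, $\D$, $\J$, $\H$, $\leq_\L$, $\leq_\R$ and $\leq_\J$. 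This at once yields the first bullet point together with the first alternative of the second.

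For the transpose form I would write $T = \mu \circ \tau$, where $\tau \colon X \mapsto X^T$ and $\mu \colon Y \mapsto P Y Q$. The map $\mu$ again strongly preserves all of Green's relations, so the whole question reduces to the action of $\tau$. Since the columns of $X^T$ are precisely the transposes of the rows of $X$, the characterisations in Theorem~\ref{thm:HK} give ${\rm Col}_{\S}(A^T) = {\rm Col}_{\S}(B^T)$ exactly when ${\rm Row}_{\S}(A) = {\rm Row}_{\S}(B)$, and similarly for the corresponding containments; hence $\tau$ strongly exchanges $\L$ with $\R$ and $\leq_\L$ with $\leq_\R$, while (as recorded in Section~2) strongly preserving $\J$, $\D$, $\leq_\J$ and $\H$. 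Composing with $\mu$, which strongly preserves every relation, I would then conclude that $T$ strongly preserves $\D$, $\J$, $\leq_\J$ and $\H$ and strongly exchanges $\L$ with $\R$ and $\leq_\L$ with $\leq_\R$, giving the second alternative.

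I do not expect a serious obstacle here, since the corollary is essentially a bookkeeping consequence of Theorem~\ref{t2}. The only two points requiring genuine care are the correct tracking of how transposition interchanges row and column spaces (so that $\L$ and $\R$, and their orders, are genuinely swapped rather than merely preserved), and the elementary verification that strong exchange of a pair of relations is inherited under post-composition with a map that strongly preserves both relations of the pair. The latter I would check by chasing the two-way implications $A \L B \iff \tau(A) \R \tau(B) \iff \mu\tau(A) \R \mu\tau(B)$ and its dual, which together with $T = \mu \circ \tau$ deliver exactly the claimed equivalences.
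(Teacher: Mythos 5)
Your proposal is correct and follows exactly the route the paper intends: the corollary is stated without proof precisely because it is immediate from Theorem~\ref{t2} together with the observations in Section~2 that maps $X \mapsto PXQ$ (with $P,Q$ invertible) strongly preserve all of Green's relations and pre-orders, while transposition strongly preserves $\le_\J$, $\J$, $\D$, $\H$ and strongly exchanges $\L$ with $\R$ (and $\le_\L$ with $\le_\R$). Your explicit decomposition $T = \mu \circ \tau$ and the check that strong exchange survives post-composition with a strong preserver just makes this bookkeeping fully rigorous.
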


\begin{corollary}
Let $\S$ be an anti-negative semifield and $T:M_n(\S)\to M_n(\S)$ a bijective $\S$-linear transformation. Then the following are equivalent:
\begin{itemize}
\item[(i)]  $T$ exchanges $\L$ with $\R$;
\item[(ii)]   $T$ exchanges $\leq_{\L}$ with $\leq_{\R}$;
\item[(iii)]   $T$ strongly exchanges $ {\L}$ with $ {\R}$;
\item[(iv)]   $T$ strongly exchanges $\leq_{\L}$ with $\leq_{\R}$;
\item[(v)] there exist invertible (monomial) matrices $P,Q\in M_n(\S)$ such that for all $X\in M_n(\S)$, we have $T(X)=PX^{ T} Q$.
\end{itemize}
\end{corollary}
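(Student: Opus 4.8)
The plan is to deduce this corollary from Theorem~\ref{t1} by pre-composing $T$ with the transposition map, exploiting the observation recorded immediately after the proof of Theorem~\ref{t2}. I would write $\tau : M_n(\S) \to M_n(\S)$ for the map $X \mapsto X^T$; this is a bijective $\S$-linear involution, and it follows at once from Theorem~\ref{thm:HK} (since transposing a matrix interchanges its row and column spaces) that $\tau$ strongly exchanges $\L$ with $\R$ and strongly exchanges $\le_\L$ with $\le_\R$. Given our map $T$, I would set $S = \tau \circ T$, so that $S(X) = T(X)^T$ and, because $\tau$ is an involution, $T = \tau \circ S$. Since $S$ is a composite of bijective $\S$-linear maps it is itself a bijective $\S$-linear transformation, so Theorem~\ref{t1} applies to it.

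The first step is to set up a precise dictionary translating the ``exchange'' hypotheses on $T$ into ``preserve'' hypotheses on $S$. Unwinding the definitions, $S$ preserves $\L$ exactly when $X \L Y \Rightarrow T(X)^T \L T(Y)^T$, that is, exactly when $X \L Y \Rightarrow T(X) \R T(Y)$; dually $S$ preserves $\R$ exactly when $X \R Y \Rightarrow T(X) \L T(Y)$. Hence $T$ exchanges $\L$ with $\R$ if and only if $S$ preserves both $\L$ and $\R$, which is precisely the content of the remark after Theorem~\ref{t2} applied to $S$. The same computation with the pre-orders (which transposition also interchanges) shows that $T$ exchanges $\le_\L$ with $\le_\R$ if and only if $S$ preserves both $\le_\L$ and $\le_\R$; and reading every implication above as a biconditional shows that the strong versions correspond likewise, so that $T$ strongly exchanges $\L$ with $\R$ (resp.\ $\le_\L$ with $\le_\R$) if and only if $S$ strongly preserves $\L$ and $\R$ (resp.\ $\le_\L$ and $\le_\R$).

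With this dictionary in hand the corollary falls out of Theorem~\ref{t1}. That theorem tells us that each of the conditions ``$S$ preserves $\L$'', ``$S$ preserves $\R$'', ``$S$ preserves $\le_\L$'', ``$S$ preserves $\le_\R$'' is equivalent to the existence of monomial matrices $P, Q$ with $S(X) = PXQ$ for all $X$. By the dictionary, the first four conditions (i)--(iv) of our corollary are therefore all equivalent to this single statement about $S$. Finally I would transpose: $S(X) = PXQ$ for all $X$ holds if and only if $T(X) = S(X)^T = (PXQ)^T = Q^T X^T P^T$ for all $X$, which is condition (v) with the monomial matrices $Q^T$ and $P^T$ (transposes of monomial matrices are monomial) in place of $P$ and $Q$. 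The strong statements (iii) and (iv) require no extra work: once $S$ has the form $PXQ$ it strongly preserves $\L$, $\R$, $\le_\L$ and $\le_\R$ (as already noted for maps of this form), so via the dictionary $T$ strongly exchanges the corresponding relations.

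The argument is essentially bookkeeping, so I expect no serious obstacle; the only points demanding care are getting the composition order right (so that $T = \tau \circ S$ rather than $S \circ \tau$, which matters because transposition does not in general commute with $T$) and keeping track of which relation each implication concerns, so that the two biconditionals constituting a strong exchange are matched correctly with strong preservation of $\L$ and of $\R$ separately.
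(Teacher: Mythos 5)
Your proposal is correct and is essentially the paper's intended derivation: the paper states this corollary without a written proof, but the observation it records just before the corollaries (that a map $S$ (strongly) preserves $\L$ and $\R$ if and only if $X \mapsto S(X)^T$ (strongly) exchanges $\L$ with $\R$) combined with Theorem~\ref{t1} is exactly your transposition-plus-dictionary argument. Your bookkeeping — reducing all four exchange conditions to the single statement $S(X)=PXQ$ via Theorem~\ref{t1}, then transposing to get $T(X)=Q^T X^T P^T$, and getting the strong versions for free from the fact that maps of the form $X\mapsto PXQ$ strongly preserve all the relations — is sound.
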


\begin{corollary}
Let $\S$ be an anti-negative semifield.
If a bijective $\S$-linear transformation on $M_n(\S)$ preserves any of $\L$, $\R$, $\leq_\L$ and $\leq_R$ then it strongly preserves
$\L$, $\R$, $\D$, $\J$, $\H$, $\leq_\L$, $\leq_\R$ and $\leq_J$.
\end{corollary}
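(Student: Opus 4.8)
The plan is to invoke Theorem~\ref{t1} to pin down the form of $T$ and then to read off all the required conclusions from the two main theorems already proved, using the inverse map to turn mere preservation into strong preservation. First I would observe that if $T$ preserves any one of $\L$, $\R$, $\leq_\L$ or $\leq_\R$, then by the equivalence of conditions (i)--(v) in Theorem~\ref{t1} it must have the form $T(X) = PXQ$ for some invertible (monomial) matrices $P, Q \in M_n(\S)$. The key structural observation is that $T^{-1}$ is given by $X \mapsto P^{-1} X Q^{-1}$, and so is a transformation of exactly the same type.

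Next I would deduce preservation of every listed relation directly from the structure theorems. Since $T(X) = PXQ$, the implication (v)$\Rightarrow$(i)--(iv) of Theorem~\ref{t1} already guarantees that $T$ preserves $\L$, $\R$, $\leq_\L$ and $\leq_\R$; and since this $T$ is an instance of condition (iv) of Theorem~\ref{t2} (taking the non-transposed alternative), the implication (iv)$\Rightarrow$(i)--(iii) of that theorem guarantees that $T$ also preserves $\D$, $\J$ and $\leq_\J$. Preservation of $\H$ then follows immediately from its definition as $\R \cap \L$. Because $T^{-1}$ lies in the same family of maps, precisely the same conclusions apply to $T^{-1}$.

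Finally I would upgrade preservation to strong preservation using the inverse. For any relation $\tau$ in our list, if $T(A)\,\tau\,T(B)$ then applying $T^{-1}$ (which preserves $\tau$ by the previous paragraph) gives $A\,\tau\,B$; combined with the forward implication this yields $A\,\tau\,B \iff T(A)\,\tau\,T(B)$, which is exactly strong preservation. Running this argument for each of $\L$, $\R$, $\D$, $\J$, $\H$, $\leq_\L$, $\leq_\R$ and $\leq_\J$ completes the proof.

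I do not expect any real obstacle here: all the genuine work has already been carried out in Lemma~\ref{lem:LR} and Theorems~\ref{t1} and~\ref{t2}, and this corollary is essentially an exercise in packaging. The only point meriting a moment's care is the distinction between preservation and strong preservation, which is dispatched cleanly by the observation that $T^{-1}$ belongs to the same family of maps $X \mapsto PXQ$, so that every preservation statement can be run in both directions.
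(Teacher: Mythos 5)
Your proof is correct and follows essentially the same route the paper intends: the corollary is stated without explicit proof precisely because, once Theorem~\ref{t1} pins down $T$ as $X \mapsto PXQ$ with $P,Q$ invertible monomial matrices, strong preservation of all the listed relations is the ``straightforward to verify'' fact already noted in Section~2 of the paper. Your inverse-map argument ($T^{-1}(X) = P^{-1}XQ^{-1}$ has the same form, so preservation in both directions gives strong preservation) is a clean way of carrying out that verification, and all your intermediate appeals to Theorems~\ref{t1} and~\ref{t2} are sound.
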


\section{The $\H$ relation}
So far we have proved relatively little about the $\H$-relation. There are reasons to expect that it might behave like the $\D$, $\J$ and $\leq_\J$ relations. We have not been able to establish this at quite the same level of generality as our results above (that is, for all anti-negative semifields),
but we shall show this does happen for many anti-negative semifields including the key examples of the boolean semifield $\B$ and the tropical semifield $\mathbb{R}_{\rm_{max}}$. We begin with a lemma analogous to, but weaker than, Lemmas~\ref{lem:LR} and \ref{lem:D} above.

\begin{lemma} \label{lem:H}
Let $\S$ be an anti-negative semifield and let $T:M_n(\S)\to M_n(\S)$ be a bijective $\S$-linear transformation. If $T$  preserves the $\H$ relation then there exist $\check{\sigma}, \hat{\sigma} \in {\rm Sym}([n])$ and non-zero elements $\alpha_{i,j} \in \S$ such that either
$$T(E_{i,j}) = \alpha_{i,j} E_{\check{\sigma}(i),\hat{\sigma}(j)} \mbox { for all } i, j \in [n]; \mbox{ or }$$
$$T(E_{i,j}) = \alpha_{i,j} E_{\hat{\sigma}(j),\check{\sigma}(i)} \mbox{ for all } i,j \in [n].$$
\end{lemma}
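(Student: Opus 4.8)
The plan is to start from the form provided by Lemma~\ref{lem:BG}: write $T(E_{i,j}) = \alpha_{i,j} E_{\sigma(i,j)}$ with each $\alpha_{i,j} \neq 0_{\S}$ and $\sigma \in {\rm Sym}([n]\times[n])$. Since the $\alpha_{i,j}$ are simply carried along, everything reduces to showing that $\sigma$ is a \emph{grid map}, i.e.\ that $\sigma(i,j) = (\check\sigma(i),\hat\sigma(j))$ for all $i,j$ (the standard case) or $\sigma(i,j) = (\hat\sigma(j),\check\sigma(i))$ for all $i,j$ (the transpose case). The essential difficulty, compared with Lemmas~\ref{lem:LR} and~\ref{lem:D}, is that $\H$ is a far finer relation, so $\H$-related pairs are scarce: the factor-rank test matrices used before give no information here, because by Theorem~\ref{thm:HK} a matrix whose row space \emph{or} column space is one-dimensional is $\H$-related only to its own scalar multiples, so its $\H$-class is rigid. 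The productive source of genuinely distinct $\H$-related pairs is instead the group of units.

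First I would exploit that all permutation matrices $P_\tau = \sum_{i} E_{i,\tau(i)}$ are invertible, hence lie in a single $\H$-class (that of the identity). As $T$ preserves $\H$, the images $T(P_\tau)$ all lie in one $\H$-class and so, by Theorem~\ref{thm:HK}, share a common row space and a common column space. The key observation is that the set of nonzero columns of a matrix is determined by its row space — a column vanishes precisely when every vector of the row space vanishes in that coordinate — and dually for rows. Hence all the $T(P_\tau)$ have the \emph{same} set of nonzero rows and the \emph{same} set of nonzero columns. Now the support of $T(P_\tau)$ is exactly $\{\sigma(i,\tau(i)) : i \in [n]\}$, and as $\tau$ ranges over ${\rm Sym}([n])$ these supports sweep out every cell $\sigma(i,j)$, hence all of $[n]\times[n]$. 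Therefore the common row- and column-supports must be all of $[n]$: each $T(P_\tau)$ then has a nonzero entry in every row and every column while carrying exactly $n$ nonzero entries, so its support is a transversal (one cell per row and per column). In other words $\sigma$ carries every permutation pattern to a permutation pattern.

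It remains to convert this into grid structure. Since $\sigma$ permutes the finite set of transversals bijectively, $\sigma^{-1}$ preserves transversals as well; and two distinct cells share a row or a column if and only if they lie in no common transversal, so $\sigma$ preserves this collinearity relation in both directions. Thus $\sigma$ is an automorphism of the $n\times n$ rook's graph $R_{n,n}$ (cells as vertices, collinear pairs as edges). I would finish either by quoting the description of ${\rm Aut}(R_{n,n})$ as the group generated by row permutations, column permutations and transposition, or, to stay self-contained, by the elementary remark that for $n \geq 3$ the maximal cliques of $R_{n,n}$ are exactly its $n$ rows and $n$ columns: an automorphism permutes these $2n$ cliques, and since two rows (or two columns) are disjoint while each row meets each column in one cell, it must either preserve or interchange the row/column dichotomy. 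A cell $(i,j)$, being the intersection of row $i$ and column $j$, then maps to $(\check\sigma(i),\hat\sigma(j))$ or to $(\hat\sigma(j),\check\sigma(i))$ respectively, which are exactly the two asserted forms; the cases $n \leq 2$ are checked directly.

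The step I expect to be the main obstacle is the very first one: recognising that the rank-based test matrices that drove the earlier lemmas produce only rigid $\H$-classes and hence yield nothing, so that one must extract all the structural information from the single rich $\H$-class of units. Once this is in place the support argument is short and, pleasantly, uniform — it needs neither a separate treatment of $\B$ nor any appeal to factor rank — and the only remaining work is the combinatorial identification of rook's-graph automorphisms with grid maps.
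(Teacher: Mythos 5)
Your proposal is correct, and it takes a genuinely different route from the paper's proof. The paper argues locally: if $\sigma$ sent some non-collinear pair of cells $(i,k),(j,l)$ to a collinear pair, it considers the two-entry patterns $A=E_{i,k}+E_{j,l}$ and $B=E_{i,l}+E_{j,k}$, which are $\H$-related by Theorem~\ref{thm:HK}; since $T(A)$ would be supported in a single line, its $\H$-class is support-rigid, so $T(B)$ would have to occupy the same two cells as $T(A)$, contradicting injectivity of $\sigma$. Having thus shown that $\sigma$ preserves collinearity, the paper gets the standard/transpose dichotomy by re-running the combinatorial endgame of Lemma~\ref{lem:D}. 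Notably, the rigidity of line-supported $\H$-classes, which you set aside at the outset as yielding nothing, is precisely the paper's weapon: instead of seeking a rich $\H$-class, it feeds $T$ a genuinely $\H$-related pair and lets the rigidity of the \emph{image} class force a collision of supports. Your global argument --- the permutation matrices forming the $\H$-class of the identity, the observation that the row space (resp.\ column space) determines the set of nonzero columns (resp.\ rows), the covering argument forcing every $T(P_\tau)$ to have transversal support, and the maximal-clique classification of rook's-graph automorphisms --- is sound at every step; in particular your passage from transversal-preservation to collinearity-preservation (via extendability of non-collinear pairs to transversals, valid for $n\ge 2$) and your clique analysis for $n\ge 3$ are correct, and both approaches are in the end anchored in Theorem~\ref{thm:HK} and exploit permutation-like supports (the paper's partial, with two entries; yours full, with $n$). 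One expectation in your closing paragraph is slightly off target, though: the paper's own proof of this lemma also needs neither factor rank nor a separate treatment of $\B$; those devices appear only in Lemmas~\ref{lem:LR} and~\ref{lem:D}. What your route buys is self-containedness and conceptual clarity (the unit group as the canonical source of $\H$-related matrices with different supports); what the paper's buys is brevity, since a single $2\times 2$ configuration suffices and the combinatorial finish is already written down.
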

\begin{proof}
By Lemma~\ref{lem:BG}  there exist $\alpha_{i,j} \in \S$ and $\sigma \in {\rm Sym}([n]\times [n])$ such that $T(E_{i,j}) = \alpha_{i,j}E_{\sigma(i,j)}$ for all $i,j \in [n]$. Define row sets and column sets of basic matrices as in the proof of Lemma~\ref{lem:D}. Just as there, we need to show that $T$ maps
row sets and column sets to row sets and column sets.

Suppose not. Then there must exist $i,j,k,l \in [n]$ with $i \neq j$ and $k \neq l$ such that either
\begin{itemize}
\item $\sigma(i,k)=(m,q)$ and $\sigma(j,l)=(p,q)$; or
\item $\sigma(i,k)=(q,m)$ and $\sigma(j,l)=(q,p)$,
\end{itemize}
for some $m,p,q$ with $m \neq p$. We consider the first case, the second being dual.

Consider the matrices $A =E_{i,k} + E_{j,l}$ and $B=E_{i,l} + E_{j,k}$. It is easy to see (for example by Theorem~\ref{thm:HK}) that $A\H B$. Now $T(A)$ has non-zero entries only in the $(m,q)$ and $(p,q)$ positions, and it is also easy to show that any matrix $\H$-related to $T(A)$ must have non-zero entries in these same two positions. Since $T$ preserves the $\H$ relation we have $T(A) \H T(B)$, so $T(B)$ must have this form, thus we must have $\lbrace \sigma(i,l), \sigma(j,k) \rbrace = \lbrace (m,q), (p,q) \rbrace = \lbrace \sigma(i,k), \sigma(j,l) \rbrace$ which clearly contradicts the fact that $\sigma$ is a permutation of $[n] \times [n]$.

This establishes that $T$ maps row sets and column sets to row sets and column sets. We now reason exactly as in the proof of Lemma~\ref{lem:D} to deduce that it either maps row sets to row sets and column sets to column sets (the standard case), or row sets to column sets and column sets to row sets (the transpose case). The construction of the permutations $\hat\sigma$ and $\check\sigma$ is then also just as in the proof of Lemma~\ref{lem:D}.
\end{proof}

It is instructive to compare Lemma~\ref{lem:H} with Lemmas~\ref{lem:LR} and \ref{lem:D}, which played a key role in establishing
our results for the other relations. Recall that given a bijective $\S$-linear map $T : M_n(\S) \to M_n(\S)$ preserving one of Green's relations, Lemma~\ref{lem:BG} allows us to describe $T$ by a permutation $\sigma$ of $[n] \times [n]$ and invertible constants $\alpha_{i,j}$ such that $T(E_{i,j}) = \alpha_{i,j}E_{\sigma(i,j)}$ for all $i$ and $j$.
Lemma~\ref{lem:LR} (where $T$ preserves $\L$, $\R$, $\leq_{\L}$ or $\leq_{\R}$) and Lemma~\ref{lem:D} (where $T$ preserves $\D$, $\J$ or $\leq_{\J}$) then allow us to split the permutation $\sigma$ of $[n] \times [n]$ into two permutations of $[n]$, and split the coefficients $\alpha_{i,j}$ as $x_i y_j$ for non-zero elements $x_i$ and $y_j$. Lemma~\ref{lem:H} (where $T$ preserves $\H$) performs the first of these tasks but not the second. We
do not know if it is possible to split the coefficients where $T$ preserves $\H$ over a completely general anti-negative semifield $\S$. The essence of the proof of the following
lemma is that we can do so, and hence obtain a characterisation of $\H$-preserving linear bijections, unless $\S$ admits a $2 \times 2$ matrix with an
extremely strong combination of properties. In fact we do not know whether a matrix of this kind can exist over an anti-negative semifield; we shall see that it certainly cannot exist over well-studied examples such as the tropical and boolean semifields.

\begin{lemma}\label{lem:sticky}
Let $\S$ be an anti-negative semifield and let $T : M_n(\S) \to M_n(\S)$ be a bijective $\S$-linear transformation. If $T$ preserves $\H$ then
either
\begin{itemize}
\item[(A)] there exist invertible (monomial) matrices $P,Q\in M_n(\S)$ such that either $T(X)=PXQ$ for all $X \in M_n(\S)$ or $T(X)=P X^T Q$ for all $X \in M_n(\S)$; or
\item[(B)] there exists $M = \left(\begin{array}{c c} a & b \\c& d\end{array}\right) \in M_2(\S)$ such that:
\begin{itemize}
\item[(S1)] $a,b,c,d$ are invertible in $\S$;
\item[(S2)] $M$ has factor rank $2$; and
\item[(S3)] for every invertible element $k \in \S$, the matrices
$$A_k = \left(\begin{array}{c c} ak & b \\c& dk\end{array}\right) \textrm{ and }  B_k = \left(\begin{array}{c c} a & bk \\ck& d\end{array}\right)$$
are $\mathcal{H}$-related in $M_2(\S)$.
\end{itemize}
\end{itemize}
\end{lemma}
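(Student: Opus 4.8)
The starting point is Lemma~\ref{lem:H}, which already carries out half of the work required: since $T$ preserves $\H$, there are permutations $\check\sigma,\hat\sigma\in{\rm Sym}([n])$ and invertible coefficients $\alpha_{i,j}\in\S$ with either $T(E_{i,j})=\alpha_{i,j}E_{\check\sigma(i),\hat\sigma(j)}$ (standard case) or $T(E_{i,j})=\alpha_{i,j}E_{\hat\sigma(j),\check\sigma(i)}$ (transpose case). The transpose case reduces to the standard one: the map $S\colon X\mapsto T(X^T)$ is again a bijective $\S$-linear transformation, it preserves $\H$ because transposition does, and it falls into the standard case of Lemma~\ref{lem:H}. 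As alternative (B) is a statement about the existence of a matrix in $M_2(\S)$ and makes no reference to the transformation, while $S(X)=PXQ$ translates into $T(X)=PX^TQ$, it suffices to establish the dichotomy for $S$. Thus I would assume from now on that $T$ is in the standard case.

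The crux is whether the coefficients can be split as $\alpha_{i,j}=x_iy_j$, something Lemma~\ref{lem:H} does \emph{not} provide (contrast Lemmas~\ref{lem:LR} and~\ref{lem:D}). Assembling the coefficients into $R=(\alpha_{i,j})$, such a splitting into non-zero $x_i,y_j$ exists precisely when $R$ has factor rank $1$, and in that event the computation in the proofs of Theorems~\ref{t1} and~\ref{t2} yields monomial $P,Q$ with $T(X)=PXQ$, which is alternative (A). So the plan is to dichotomise on $f(R)$: either $f(R)=1$, giving (A), or $f(R)\neq1$, in which case I must manufacture the matrix of alternative (B).

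Assume $f(R)\neq1$. Because every entry of $R$ is invertible, $R$ has factor rank $1$ exactly when every $2\times2$ submatrix $\left(\begin{smallmatrix}a&b\\c&d\end{smallmatrix}\right)$ satisfies $ad=bc$ (one direction is immediate from $\alpha_{i,j}=x_iy_j$; for the converse fix a pivot row and column and read off the $x_i$ and $y_j$). Hence there are rows $i_1\neq i_2$ and columns $j_1\neq j_2$ for which $M=\left(\begin{smallmatrix}\alpha_{i_1,j_1}&\alpha_{i_1,j_2}\\\alpha_{i_2,j_1}&\alpha_{i_2,j_2}\end{smallmatrix}\right)$ has $ad\neq bc$, that is, factor rank $2$ by Remark~\ref{rem_rank2}. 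Writing $a,b,c,d$ for its entries, this $M$ satisfies (S1) and (S2) at once; it remains to verify (S3). Fix an invertible $k$ and set
$$\tilde X_k=kE_{i_1,j_1}+E_{i_1,j_2}+E_{i_2,j_1}+kE_{i_2,j_2},\qquad \tilde Y_k=E_{i_1,j_1}+kE_{i_1,j_2}+kE_{i_2,j_1}+E_{i_2,j_2}.$$
Since $\tilde Y_k$ is obtained from $\tilde X_k$ simply by interchanging rows $i_1$ and $i_2$, the two matrices have the same row space and the same column space, so $\tilde X_k\,\H\,\tilde Y_k$ by Theorem~\ref{thm:HK}. Applying $T$ and using $T(E_{i,j})=\alpha_{i,j}E_{\check\sigma(i),\hat\sigma(j)}$, both images are supported on the block with rows $\{\check\sigma(i_1),\check\sigma(i_2)\}$ and columns $\{\hat\sigma(j_1),\hat\sigma(j_2)\}$, where they read precisely as $A_k$ and $B_k$. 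For matrices supported in a single $2\times2$ block, being $\H$-related in $M_n(\S)$ is equivalent (again via row and column spaces) to the blocks being $\H$-related in $M_2(\S)$; hence the hypothesis $T(\tilde X_k)\,\H\,T(\tilde Y_k)$ gives $A_k\,\H\,B_k$, which is (S3).

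The main obstacle is exactly the step that Lemma~\ref{lem:H} leaves open, namely splitting the coefficients; the whole dichotomy is designed to isolate the single situation in which this can fail. The genuinely delicate point is the construction of the pair $\tilde X_k,\tilde Y_k$: the scalar $k$ must be placed on the ``diagonal'' of the block in one matrix and on the ``anti-diagonal'' in the other, so that the two inputs stay $\H$-related (indeed they differ by a single row swap) while $\S$-linearity forces their images to be exactly $A_k$ and $B_k$. Beyond this, the only routine verification needed is that $\H$ for block-supported matrices reduces to $\H$ of the $2\times2$ blocks.
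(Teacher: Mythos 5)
Your proposal is correct and follows essentially the same route as the paper: the same pair of test matrices (the paper's $U$ and $V$, your $\tilde X_k,\tilde Y_k$) is used to force (S3), and failure of the rank-$2$ condition on $2\times 2$ submatrices of $R=(\alpha_{i,j})$ is what yields factor rank $1$ and hence alternative (A) --- you merely organize this as a direct dichotomy on $f(R)$ (and reduce the transpose case via $X \mapsto T(X^T)$) where the paper argues contrapositively from the failure of (B), treating the transpose case by repeating the argument. One small wording point: interchanging two rows does not in general preserve column space; it does here only because for these particular matrices the row swap coincides with a column swap, which is exactly the observation the paper makes (``up to reordering, $U$ and $V$ have the same rows, and also the same columns'').
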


\begin{proof}
Suppose condition (B) is not satisfied; we shall show that condition (A) must be.

Since $T$ preserves $\H$, Lemma \ref{lem:H} tells us that there exist permutations $\check{\sigma}$ and $\hat{\sigma}$ of $\{1, \ldots, n\}$ and non-zero elements $\alpha_{l,p} \in \S$ such that either:
\begin{itemize}
\item $T(E_{l,p}) = \alpha_{l,p}E_{\check{\sigma}(l), \hat{\sigma}(p)}$, for all $l, p \in [n]$ (the standard case); or
\item $T(E_{l,p}) = \alpha_{l,p}E_{\hat{\sigma}(p), \check{\sigma}(l), }$, for all $l, p \in [n]$ (the transpose case).
\end{itemize}
Following the same lines of reasoning given in the proof of Lemma \ref{lem:D} and Theorem \ref{t2}, if we let $R$ be the matrix with $R_{l,p} = \alpha_{l,p}$, then it suffices to show that every row of $R$ can be written as a multiple of some common row vector.

Suppose first that we are in the standard case. Let $l,m,p,q \in [n]$ and define
$$M = \left(\begin{array}{c c} \alpha_{l,p} & \alpha_{l,q} \\ \alpha_{m,p} & \alpha_{m,q} \end{array}\right).$$
Notice that $M$ satisfies condition (S1).
Now for any invertible $k \in \S$ consider the matrices:
\begin{eqnarray*}
U &=& kE_{l,p} + E_{l,q} + E_{m,p} +kE_{m,q}\\
V &=& E_{l,p} + kE_{l,q} + kE_{m,p} +E_{m,q}.
\end{eqnarray*}
Up to reordering, $U$ and $V$ have the same rows, and also the same columns. Thus $U \H V$,
and so by assumption $T(U) \H T(V)$. Now we have
\begin{eqnarray*}
T(U) &=& \alpha_{l,p}kE_{i,r} + \alpha_{l,q}E_{i,s} + \alpha_{m,p}E_{j,r} +\alpha_{m,q}kE_{j,s}\\
T(V) &=& \alpha_{l,p}E_{i,r} + \alpha_{l,q}kE_{i,s} + \alpha_{m,p}kE_{j,r} +\alpha_{m,q}E_{j,s},
\end{eqnarray*}
where $\check{\sigma}(l)=i$, $\check{\sigma}(m)=j$, $\hat{\sigma}(p)=r$ and $\hat{\sigma}(q)=s$.
Notice that the $2 \times 2$ submatrices of $T(U)$ and $T(V)$ obtained by restricting to the non-zero rows and columns (that is, rows $i$ and $j$ and columns $r$ and $s$) are exactly the matrices $A_k$ and $B_k$ from the statement of the Lemma. It follows easily that $A_k \H B_k$.

Since $k$ was a general non-invertible element this means that $M$ satisfies condition (S3). Since by assumption condition (B) does not hold, it must be that $M$ fails to satisfy condition (S2). In other words,  $M$ must have rank strictly less than $2$, which by Remark~\ref{rem_rank2} means that 
$\alpha_{l,p} \alpha_{m,q} = \alpha_{l,q} \alpha_{m,p}$. Fixing $l,m,p$ and letting $q$ vary, we deduce that row $l$ of $R$ can be obtained from row $m$ of $R$ via multiplication by $\alpha_{l,p}(\alpha_{m,p})^{-1}$. Since $l$ and $m$ were arbitrary, this shows that all the rows of $R$ are scalar multiples of each other, as required.

In the transpose case we use an almost identical argument but working with the transpose of the above matrix $M$.
\end{proof}

\begin{lemma}
\label{rank2}
Let $\S$ be an anti-negative semifield and suppose $M \in M_2(\S)$ satisfies conditions (S1), (S2) and (S3) from the statement
of Lemma~\ref{lem:sticky}. Let $k \in \S$ be an invertible element. Then the matrices
$$A_k = \left(\begin{array}{c c} ak & b \\c& dk\end{array}\right) \textrm{ and } B_k = \left(\begin{array}{c c} a & bk \\ck& d\end{array}\right) \in M_2(\S)$$
have factor rank $2$.
\end{lemma}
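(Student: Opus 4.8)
The plan is to show that each of the matrices $A_k$ and $B_k$ has factor rank exactly $2$. Since each has four invertible (in particular non-zero) entries, Remark~\ref{rem_rank2} applies: such a matrix has factor rank at most $2$, and equals $2$ precisely when the ``diagonal product'' differs from the ``anti-diagonal product''. So the entire task reduces to checking this determinant-like inequality for $A_k$ and for $B_k$, using only the hypotheses (S1), (S2), (S3).

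**First I would** compute the relevant products for $A_k$. For $A_k = \left(\begin{smallmatrix} ak & b \\ c & dk \end{smallmatrix}\right)$, factor rank $2$ holds by Remark~\ref{rem_rank2} if and only if $(ak)(dk) \neq bc$, i.e. $ad k^2 \neq bc$; dually for $B_k = \left(\begin{smallmatrix} a & bk \\ ck & d \end{smallmatrix}\right)$ the condition is $ad \neq (bk)(ck) = bc k^2$. Now (S2) tells us that $M$ itself has factor rank $2$, which by Remark~\ref{rem_rank2} means $ad \neq bc$. So I would like to leverage $ad \neq bc$ together with invertibility of $k$ to rule out $ad k^2 = bc$ and $bc k^2 = ad$.

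**The main obstacle** is that $ad \neq bc$ does not by itself preclude $ad k^2 = bc$ for some particular invertible $k$ — this is exactly where condition (S3) must do the work, and the delicate point is that (S3) asserts $A_k \H B_k$ for \emph{every} invertible $k$, so I expect to argue by contradiction: suppose one of the factor ranks drops to $1$ for some invertible $k$, and derive a violation of the $\H$-relation promised by (S3) or a contradiction with (S2). **The key step** will be to combine the two putative rank-$1$ equations: if $A_k$ has factor rank $1$ then $ad k^2 = bc$, and I would feed this back in. Using invertibility of $a,b,c,d,k$ (from (S1)), the equation $adk^2 = bc$ can be rearranged; I would then examine the row or column spaces of $A_k$ and $B_k$ forced by rank $1$ and compare them via Theorem~\ref{thm:HK}, since $A_k \H B_k$ forces $\operatorname{Row}_\S(A_k) = \operatorname{Row}_\S(B_k)$ and $\operatorname{Col}_\S(A_k) = \operatorname{Col}_\S(B_k)$.

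**Concretely**, I expect the cleanest route is: assume for contradiction that (say) $A_k$ has factor rank $1$, so $ad k^2 = bc$. Since $A_k \H B_k$ by (S3), and factor rank is an $\H$-class invariant (Remark~\ref{rem_factorgreen}), $B_k$ must also have factor rank $1$, giving $bc k^2 = ad$. Substituting one relation into the other yields $ad k^4 = ad$, and cancelling the invertible $ad$ gives $k^4 = 1_\S$. This constrains $k$ but need not yet be contradictory, so I would then push further: from $adk^2 = bc$ and the invertibility of all entries I can solve $bc = adk^2$ and substitute into the original rank condition for $M$, namely $ad \neq bc = adk^2$, which forces $k^2 \neq 1_\S$. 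Combined with the structural information from the coincidence of row/column spaces of the rank-$1$ matrices $A_k$ and $B_k$ (each row of a rank-$1$ matrix being a scalar multiple of a fixed vector), I expect to pin down a relation such as $k^2 = 1_\S$, contradicting $k^2 \neq 1_\S$. The heart of the argument is thus extracting, from the single hypothesis $A_k \H B_k$ plus rank $1$, enough equations in the invertible quantities $a,b,c,d,k$ to collide with $ad \neq bc$; everything else is routine manipulation in the abelian group $\S \setminus \{0_\S\}$.
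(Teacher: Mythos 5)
Your proposal is correct and follows the same skeleton as the paper's proof: assume for contradiction that the factor rank drops, note that by (S3) and Remark~\ref{rem_factorgreen} the matrices $A_k$ and $B_k$ then both have factor rank $1$, extract the two equations $adk^2=bc$ and $bck^2=ad$ via Remark~\ref{rem_rank2}, and collide these with (S2). The endgame, however, is genuinely different and arguably cleaner in your version. The paper derives $k^4=1_{\S}$, passes to the auxiliary rank-one matrix $k^3A_k$ (which has the same diagonal as $B_k$), argues that two $\H$-related rank-one matrices with non-zero entries and equal diagonals must be equal, and reads off $k^2=1_{\S}$ and hence $ad=bc$, contradicting (S2). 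You instead observe up front that (S2) together with $adk^2=bc$ forces $k^2\neq 1_{\S}$, and then aim to contradict this by comparing the row spaces of $A_k$ and $B_k$ directly. The one step you leave as an expectation --- ``I expect to pin down a relation such as $k^2=1_{\S}$'' --- does go through exactly as you predict, and is routine: since $A_k$ has rank one with $adk^2=bc$, its second row is $ca^{-1}k^{-1}$ times its first, so ${\rm Row}_{\S}(A_k)=\S\cdot(ak,b)$; similarly ${\rm Row}_{\S}(B_k)=\S\cdot(a,bk)$; equality of these row spaces (from $A_k\H B_k$ and Theorem~\ref{thm:HK}) gives $(a,bk)=s(ak,b)$ for some $s\in\S$, whence $s=k^{-1}$ from the first coordinate and then $k=k^{-1}$, i.e.\ $k^2=1_{\S}$, from the second. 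Writing out that computation completes your proof; note that once you take this route, your intermediate deduction $k^4=1_{\S}$ is not actually needed, whereas it is essential to the paper's normalisation by $k^3$.
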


\begin{proof}
By condition (S3) we have $A_k \H B_k$, and so (by Remark~\ref{rem_factorgreen}) the matrices $A_k$ and $B_k$ have the same factor rank.

Suppose for a contradiction that this common rank is not $2$.  By Remark~\ref{rem_rank2} it follows that $adk^2=bc$ and $bck^2=ad$. Since $k$ is invertible and (by condition (S1)) so too is each of $a,b,c,d$, we may deduce that $k^4=1$. It is then easily verified that the (factor rank $1$) matrix
$$k^3 A_k \ = \ \left(\begin{array}{c c} a & bk^3 \\ck^3& d\end{array}\right)$$
is $\mathcal{H}$-related to $B_k$. Since $B_k$ and $k^3 A_k$ are of factor rank $1$ with all entries non-zero and equal diagonals, the only way this can happen is if $B_k= k^3 A_k$. Looking at the entries of the two matrices, we see that this forces $k^2=1$ and hence $ad=bc$. By Remark~\ref{rem_rank2} again this means that $M$ has factor rank 1, which contradicts condition (S2).
\end{proof}

\begin{theorem}
\label{thm:squareroot}
Let $\S$ be an anti-negative semifield in which every invertible element has a square root (for example, the boolean semifield
or tropical semifield). Let $T : M_n(\S) \to M_n(\S)$ be
a bijective $\S$-linear transformation. Then $T$ preserves $\H$ if and only if there exist invertible (monomial) matrices $P,Q\in M_n(\S)$ such that either $T(X)=PXQ$ for all $X \in M_n(\S)$ or $T(X)=P X^T Q$ for all $X \in M_n(\S)$. In particular, $T$ preserves $\H$ if and only if $T$ preserves
$\D$.
\end{theorem}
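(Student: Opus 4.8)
The plan is to prove the two implications separately, with essentially all of the genuine work residing in eliminating the pathological alternative isolated in Lemma~\ref{lem:sticky}. For the reverse implication I would simply invoke the observations recorded immediately after the definition of (strong) preservation: every map $X \mapsto PXQ$ with $P,Q$ invertible strongly preserves each of Green's relations, and the transposition map $X \mapsto X^T$ strongly preserves $\H$. Composing these, any map of the form $X \mapsto PXQ$ or $X \mapsto PX^TQ$ preserves $\H$, so this direction is immediate and requires no new ideas.

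For the forward implication, suppose $T$ preserves $\H$. Lemma~\ref{lem:sticky} then presents a dichotomy: either (A) $T$ already has one of the two required forms, in which case we are finished; or (B) there exists a matrix $M$ with entries $a,b,c,d$ satisfying (S1), (S2) and (S3). The entire task is to show that the square-root hypothesis rules out (B). Assuming (B), condition (S2) together with Remark~\ref{rem_rank2} gives $ad \neq bc$, while condition (S1) makes $a,b,c,d$ invertible; hence $k_0 := bc(ad)^{-1}$ is a well-defined nonzero, and therefore invertible, element of $\S$. This is precisely where the hypothesis does its work: I would choose a square root $k$ of $k_0$ (invertible automatically, since $k^2 = k_0 \neq 0_{\S}$ forces $k \neq 0_{\S}$), so that $adk^2 = bc$, that is $(ak)(dk) = bc$. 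Applying Remark~\ref{rem_rank2} to $A_k = \left(\begin{array}{cc} ak & b \\ c & dk \end{array}\right)$ then shows that $A_k$ has factor rank strictly less than $2$, and, as all four of its entries are nonzero, factor rank exactly $1$. This flatly contradicts Lemma~\ref{rank2}, which guarantees that $A_k$ has factor rank $2$ for \emph{every} invertible $k$. Thus (B) cannot occur and (A) must hold, which is exactly the displayed characterisation.

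The final "in particular" clause then costs nothing: Theorem~\ref{t2} already characterises the bijective $\S$-linear $\D$-preservers as precisely the maps $X \mapsto PXQ$ and $X \mapsto PX^TQ$, which is exactly the class we have just identified as the $\H$-preservers, so preserving $\H$ and preserving $\D$ are equivalent. I expect no obstacle in the routine execution, since the two hard technical results (the reduction of Lemma~\ref{lem:sticky} and the rank computation of Lemma~\ref{rank2}) are already in hand; the only genuine idea required is the choice $k = \sqrt{bc(ad)^{-1}}$, which lands $A_k$ exactly on the factor-rank-$1$ locus $(ak)(dk) = bc$ and so collides with Lemma~\ref{rank2}. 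The real conceptual point — and the only place square roots are indispensable — is that without them one cannot in general solve $adk^2 = bc$ for an invertible $k$, which is precisely why the characterisation is not claimed for arbitrary anti-negative semifields.
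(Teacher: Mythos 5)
Your proposal is correct and follows essentially the same route as the paper's own proof: invoke Lemma~\ref{lem:sticky}, and in case (B) use the square-root hypothesis to pick an invertible $k$ with $k^2 = bc(ad)^{-1}$, so that $(ak)(dk)=bc$ makes $A_k$ factor rank $1$ by Remark~\ref{rem_rank2}, contradicting Lemma~\ref{rank2}. The only (welcome) addition is that you spell out the ``in particular'' clause via Theorem~\ref{t2}, which the paper leaves implicit.
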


\begin{proof}
It is easy to see that maps of the given form preserve $\H$. For the converse, suppose $T$ is an $\S$-linear bijection which preserves $\H$, and
which does not have the given form. Then by Lemma~\ref{lem:sticky} there is a matrix
$$M = \left(\begin{array}{cc} a & b \\ c & d \end{array}\right) \in M_2(\S)$$
satisfying the conditions (S1), (S2) and (S3) given in
the Lemma. By condition (S1), the entries are all invertible, so $bc a^{-1} d^{-1}$ exists and is invertible.
Let $k$ be the square root of $bca^{-1}d^{-1}$. Clearly $k$ is invertible (with inverse $kdac^{-1}b^{-1}$). Let $A_k$ be as defined
in Lemma~\ref{lem:sticky}. Now $bca^{-1}d^{-1} = k^2$ implies $(ak)(dk) = bc$ which by Remark~\ref{rem_rank2} means that
$A_k$ has factor rank $1$. But Lemma~\ref{rank2} says that $A_k$ has factor rank $2$, giving a contradiction.
\end{proof}

The hypotheses of Theorem~\ref{thm:squareroot} apply in the usual tropical semifield over the real numbers but not, for example, the tropical semifield restricted to the integers. The following is a comparable result encompassing this and other similar cases.

\begin{theorem}
Let $\S$ be an anti-negative semifield. Suppose that whenever $\lbrace u, v \rbrace \subseteq \S^2$ and $\lbrace x, y \rbrace \subseteq \S^2$ are different generating sets for the same $2$-generated submodule of $\S^2$, we have $\lbrace u, v \rbrace = \lbrace \lambda x, \mu y \rbrace$
for some invertible $\lambda, \mu \in \S$.

Then an $\S$-linear bijection $T : M_n(\S) \to M_n(\S)$ preserves $\H$ if and only if there exist invertible (monomial) matrices $P,Q\in M_n(\S)$ such that either $T(X)=PXQ$ for all $X \in M_n(\S)$ or $T(X)=P X^T Q$ for all $X \in M_n(\S)$. In particular, $T$ preserves $\H$ if and only if $T$ preserves
$\D$.
\end{theorem}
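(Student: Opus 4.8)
The plan is to follow exactly the strategy used for Theorem~\ref{thm:squareroot}: maps of the stated form clearly preserve $\H$, so for the converse it suffices to show that, under the hypothesis on $\S$, no matrix $M \in M_2(\S)$ can satisfy conditions (S1)--(S3) of Lemma~\ref{lem:sticky}. Once that is established, Lemma~\ref{lem:sticky} forces any $\H$-preserving bijection into case (A), which is precisely the required form, and the final ``in particular'' clause is then immediate from Theorem~\ref{t2}. Accordingly I would argue by contradiction, assuming some $M = \left(\begin{smallmatrix} a & b \\ c & d \end{smallmatrix}\right)$ satisfies (S1), (S2) and (S3), and I would use the divisibility hypothesis on $\S$ to rule this out, in place of the square-root hypothesis used previously.

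Fix an arbitrary invertible $k \in \S$ and consider the matrices $A_k$ and $B_k$ of Lemma~\ref{rank2}. By condition (S3) we have $A_k \H B_k$, so by Theorem~\ref{thm:HK} their row spaces coincide; call this common submodule $N_k \subseteq \S^2$. The rows of $A_k$ are $(ak,b)$ and $(c,dk)$, while those of $B_k$ are $(a,bk)$ and $(ck,d)$, so each of these two pairs generates $N_k$. The key preliminary point is that $N_k$ is genuinely $2$-generated and \emph{not} $1$-generated: by Lemma~\ref{rank2} the matrix $A_k$ has factor rank $2$, and a matrix has factor rank at most $1$ exactly when all its rows are multiples of a single vector, so the two rows of $A_k$ cannot be proportional (in particular they are distinct). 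Hence each pair is an honest two-element generating set of the $2$-generated submodule $N_k$, and the hypothesis of the theorem applies to them.

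Now I would apply that hypothesis. If the two generating sets coincide the relation below holds trivially, and otherwise the hypothesis produces invertible $\lambda, \mu \in \S$ with $\{(ak,b),(c,dk)\} = \{\lambda(a,bk),\mu(ck,d)\}$; either way we obtain such $\lambda,\mu$. There are two ways to match elements. If $(ak,b)=\lambda(ck,d)$ and $(c,dk)=\mu(a,bk)$, then comparing first entries gives $\lambda = ac^{-1}$ and comparing second entries gives $b=\lambda d = ac^{-1}d$, i.e.\ $bc=ad$, contradicting (S2); so this matching is impossible. The only surviving possibility is $(ak,b)=\lambda(a,bk)$ and $(c,dk)=\mu(ck,d)$, and comparing entries of the first equation (using that $a,b$ are invertible) forces $\lambda = k$ and then $b = bk^2$, whence $k^2 = 1_\S$.

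Since $k$ was an arbitrary invertible element, every invertible element of $\S$ squares to $1_\S$. By Proposition~\ref{prop_notallroots} this forces $\S = \B$; but over $\B$ the only invertible element is $1_{\B}$, so (S1) makes $a=b=c=d=1_{\B}$ and then $ad = 1_{\B} = bc$, contradicting (S2) by Remark~\ref{rem_rank2}. This contradiction shows no such $M$ exists, which completes the proof. The step I expect to require the most care is the preliminary one in the second paragraph: verifying that $N_k$ is $2$-generated but not $1$-generated, so that the two row-pairs really are minimal two-element generating sets to which the hypothesis may legitimately be applied. This is exactly the role of Lemma~\ref{rank2} (and hence of the delicate Lemma~\ref{lem:sticky}); the subsequent case analysis matching up the two generating sets is then entirely routine.
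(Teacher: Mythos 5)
Your proof is correct and takes essentially the same route as the paper's: reduce via Lemma~\ref{lem:sticky} to ruling out a matrix satisfying (S1)--(S3), use Lemma~\ref{rank2} to ensure the common row space of $A_k$ and $B_k$ is genuinely $2$-generated, apply the hypothesis on $\S$ to the two row sets, and run the same two-case matching, with one case giving $ad=bc$ (contradicting (S2) via Remark~\ref{rem_rank2}) and the other giving $k^2=1_\S$ for every invertible $k$ (contradicting Proposition~\ref{prop_notallroots}). The only cosmetic difference is the treatment of the boolean case: the paper dispatches $\S=\B$ at the outset by citing Theorem~\ref{thm:squareroot}, whereas you handle it at the end by observing that over $\B$ condition (S1) forces $ad=bc$ against (S2) --- both are fine, and your explicit check that the two generating sets may be assumed distinct before invoking the hypothesis is a small point of care the paper glosses over.
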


\begin{proof}
If $\S = \mathbb{B}$ then the result follows from Theorem~\ref{thm:squareroot}, so assume $S \neq \mathbb{B}$.
Just as in the proof of Theorem~\ref{thm:squareroot}, it is clear that maps of the given form preserve $\H$, so suppose $T$ is an $\S$-linear bijection preserving $\H$ which doesn't have the given form. As before, by Lemma~\ref{lem:sticky} there is a matrix
$$M = \left(\begin{array}{cc} a & b \\ c & d \end{array}\right) \in M_2(\S)$$
satisfying the conditions (S1), (S2) and (S3) given in
the Lemma. For each invertible $k \in S$ let $A_k$ and $B_k$ be as given in Lemma~\ref{lem:sticky}. By Lemma \ref{rank2} all the matrices
$A_k$ and $B_k$ have factor rank $2$, from which it follows that the row space of each such matrix is a $2$-generated submodule of $\S^2$.  Consider the row space of $A_k$, which (since $A_k \H B_k$) is equal to the row space of $B_k$.

Applying the assumption on $\S$ where $\{u,v\}$ is the set of rows of $A_k$ and $\{x,y\}$ is the set of rows of $B_k$, it now follows that there exist invertible $\lambda,\mu \in \S$ such that either
\begin{eqnarray*}
(a, bk) &=& \lambda (c, dk) \mbox{ and } (ck, d) = \mu (ak, b), \; \; \mbox{ or }\\
(a, bk) &=& \lambda (ak, b) \mbox{ and } (ck, d) = \mu (c, dk).
\end{eqnarray*}
Since $a,b,c,d,k,\lambda,\mu$ are all invertible, in the first case we deduce that $\lambda=ac^{-1}=bd^{-1}$, so that $ad = bc$ which by
Remark~\ref{rem_rank2} contradicts condition (S2).
In the second case we deduce that $\lambda=\mu=k$ and $k^2=1_{\S}$. But since $k$ was a general invertible element and $\S \neq \mathbb{B}$,
this contradicts Proposition~\ref{prop_notallroots}.
\end{proof}

\section*{Acknowledgments}

The research contained in this article was started in 2012 during a visit of the first author to the University of Manchester, funded by EPSRC grant EP/I005293/1 (\textit{Nonlinear Eigenvalue Problems: Theory and Numerics}). He is grateful to the School of Mathematics and the Tropical Mathematics Group for their warm hospitality.  He also thanks RFBR grant 15-01-01132 for partial financial support of his research.

\end{document}